\title{A Riemannian Corollary of Helly's Theorem}
\author{
        Alexander Rusciano\\
                Department of Mathematics\\
        University of California, Berkeley\\
        rusciano@berkeley.edu
        }
\date{\today}
\theoremstyle{plain}
\newtheorem{theorem}{Theorem}
\newtheorem{lemma}[theorem]{Lemma}
\newtheorem{proposition}[theorem]{Proposition}
\theoremstyle{plain}
\newtheorem{definition}[theorem]{Definition}
\DeclareMathOperator*{\argmin}{arg\,min}
\DeclareMathOperator\supp{supp}
\begin{document}

\maketitle

\begin{abstract}
We introduce a notion of halfspace for Hadamard manifolds that is natural in the context of convex optimization.  For this notion of halfspace, we generalize a classic result of Gr\"{u}nbaum, which itself is a corollary of Helly's theorem.  Namely, given a probability distribution on the manifold, there is a point for which all halfspaces based at this point have at least $\frac{1}{n+1}$ of the mass.  As an application, the subgradient oracle complexity of convex optimization is polynomial in the parameters defining the problem.
\end{abstract}

\section{Overview}
\subsection{Introduction}
The extrema of functions are of fundamental importance in mathematics and its applications.  Much of numerical optimization studies this topic.  Most of the theory focuses on convex functions, as it has proven hard to find other classes that are both useful and tractable.  The motivation for this paper comes from the desire to expand the boundaries of this class of tractable functions.

\vspace*{.3cm}
Rigorous study of convergence rates was initiated in \cite{ZS} for first order methods for convex functions on Hadamard manifolds. That is, gradient descent methods for simply connected manifolds of non-positive sectional curvature.  Such manifolds are diffeomorphic to $\mathbb{R}^n$ and exhibit natural convex functions. In a sense, they give new classes of functions for which optimization is tractable.

\vspace*{.3cm}
Still, as far as the author is aware, all known algorithms for general convex optimization on Riemannian manifolds have iteration complexity depending polynomially on $\epsilon^{-1}$.  To achieve better convergence rates, further conditions are added such as strong convexity, dominated gradients, or recently robust second-order \cite{ZS}, \cite{ZRS}, \cite{AGLOW}.  One major unresolved question for Hadamard manifolds like $SL_n / SO_n$ is, does convexity enable algorithms whose time complexity depends polynomially on $\log(\epsilon^{-1})$?

\vspace*{.3cm}
For Euclidean optimization, cutting plane methods are the standard, general approach to get $\log(\epsilon^{-1})$ complexity.  It is well known that the minimum of a convex function lies in the halfspace opposite the subgradient direction.  Cutting plane methods use this fact to reduce the feasible set.  One feature of $\mathbb{R}^n$ that enables this approach to succeed is the existence of what are commonly termed centerpoints.  A precise definition of centerpoint is given in Definition \ref{def:centerpoint}.  Roughly speaking, if $c$ is to be a centerpoint for a set $S$, then no halfspace based at $c$ should contain too large (or small) a fraction of the volume of $S$.  Ellipsoid methods explicitly maintain a radially symmetric set, so the center of the current ellipse provides a perfect centerpoint.  Thus a subgradient at the center of an ellipse allows one to eliminate half of the ellipse.  For more general subsets $S \subset \mathbb{R}^n$ than ellipses, Gr\"{u}nbaum's result in \cite{G} shows the existence of a centerpoint $c$ for which any halfspace based at $c$ contains at most a $\frac{n}{n+1}$-fraction of the mass of $S$.  

\vspace*{.3cm}
As generalizing this result of Gr\"{u}nbaum is the main goal of this work and we fundamentally build from it in the proof, we summarize his result as Theorem \ref{thm:classic}.  Note the statement is for a general probability measure, a fact we will make use of by applying it to the Riemannian volume measure.

\begin{theorem}\label{thm:classic}
A $\frac{1}{n+1}$-centerpoint $c$ exists for any probability measure on $\mathbb{R}^n$, endowed with the usual Borel $\sigma$-algebra.  Here $c$ being a $\frac{1}{n+1}$-centerpoint means any halfspace based at $c$ contains at least a $\frac{1}{n+1}$-fraction of the mass of the probability distribution.
\end{theorem}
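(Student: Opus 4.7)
The plan is to prove the theorem by a direct application of Helly's theorem to the family of halfspaces of large measure. Concretely, let $\mathcal{F}$ denote the collection of closed halfspaces $H \subset \mathbb{R}^n$ with $\mu(H) > \frac{n}{n+1}$. I would first show that $A := \bigcap_{H \in \mathcal{F}} H$ is nonempty, and then verify that any $c \in A$ is a $\frac{1}{n+1}$-centerpoint.

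For nonemptiness, the key observation is the following Helly condition: any $n+1$ members $H_1,\ldots,H_{n+1} \in \mathcal{F}$ have a common point. If not, their complements would cover $\mathbb{R}^n$, but $\mu(H_i^c) < \frac{1}{n+1}$, so by subadditivity $\mu(\bigcup H_i^c) < 1$, contradicting $\mu(\mathbb{R}^n) = 1$. To upgrade this finite intersection property to nonempty intersection of the entire (possibly infinite) family $\mathcal{F}$, I would choose $R$ large enough that $\mu(B_R) > \frac{n}{n+1}$, so that every $H \in \mathcal{F}$ meets $B_R$, and apply Helly's theorem plus a standard compactness argument to the compact convex sets $\{H \cap B_R : H \in \mathcal{F}\}$; the same counting argument shows any $n+1$ of these have common intersection (now using $\mu(B_R^c) < \frac{1}{n+1}$ as well), so by Helly plus the finite intersection property for compact sets, the full intersection is nonempty.

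For the centerpoint property, fix $c \in A$ and let $H' = \{x : \langle x - c, v\rangle \ge 0\}$ be an arbitrary halfspace based at $c$. For each $\delta > 0$ the shifted halfspace $H'_\delta := \{x : \langle x - c, v\rangle \le -\delta\}$ does not contain $c$, hence $H'_\delta \notin \mathcal{F}$, i.e.\ $\mu(H'_\delta) \le \frac{n}{n+1}$. Letting $\delta \to 0^+$ and using that $H'_\delta$ increases to the open halfspace $\{\langle x - c, v\rangle < 0\}$, monotone convergence gives $\mu(\{\langle x-c,v\rangle < 0\}) \le \frac{n}{n+1}$, so $\mu(H') = 1 - \mu(\{\langle x-c,v\rangle < 0\}) \ge \frac{1}{n+1}$, as required.

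The main subtlety I expect to have to be careful about is the handling of boundary mass on the hyperplane $\partial H'$, since a naive ``$\mu(H') + \mu(\bar{H'}) = 1$'' identity fails when the bounding hyperplane carries positive mass; the slight outward shift by $\delta$ together with monotone convergence is what circumvents this cleanly. The Helly-plus-compactness step is the other place where some care is warranted (to rule out the intersection escaping to infinity), but the enclosing-ball trick handles it with almost no work.
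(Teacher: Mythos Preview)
The paper does not prove Theorem~\ref{thm:classic}; it is quoted as Gr\"unbaum's classical result and used as a black box in the proof of Theorem~\ref{thm:sharpness}. So there is no in-paper proof to compare against.

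Your argument is the standard Helly-based proof of the Euclidean centerpoint theorem and is essentially correct. One small point to tighten: the claim that ``any $n+1$ of the sets $H\cap \bar B_R$ have common intersection'' does not follow from the same counting, because you then have $n+2$ small-measure complements ($\bar B_R^{\,c}$ and the $n+1$ sets $H_i^c$), whose total measure is only bounded by $(n+2)/(n+1)>1$. The clean fix is to apply the infinite Helly theorem directly to the family $\mathcal{F}\cup\{\bar B_R\}$ of closed convex sets, one of which is compact: any $n+1$ of these are either all halfspaces (handled by your step~2) or consist of $\bar B_R$ together with $n$ halfspaces, in which case the $n+1$ complements each have measure $<\frac{1}{n+1}$ and the counting goes through. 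Your treatment of the centerpoint verification via the $\delta$-shift and monotone convergence is correct and handles boundary mass as you say.
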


\vspace*{.3cm}
We replicate this result in the more general setting of Hadamard manifolds in the hope that others find the result encouraging, useful, or intrinsically interesting.  Though we present the results here to provide motivation, the definitions in the Section \ref{sec:defs} make the statements precise.  The main result is from Section \ref{sec:centerpoint_existence},

\begin{theorem}\label{thm:sharpness}
Suppose $\mu$ is a probability distribution on a Hadamard manifold $M$ of dimension $n$, and $\mu$ is absolutely continuous with respect to the Riemannian volume measure $\text{vol}_g$.  Then there exists a $\frac{1}{n+1}$-centerpoint $c$ for the measure $\mu$.  If we assume the support of $\mu$ is contained in a (geodesic) convex set $S$, then $c \in S$.  Moreover, even for the uniform measure on convex and compact $S$, this value $\frac{1}{n+1}$ cannot in general be improved.
\end{theorem}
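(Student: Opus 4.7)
My plan is to mimic Gr\"unbaum's classical argument, with the notion of halfspace from Section~\ref{sec:defs} replacing Euclidean halfspaces and a Helly-type theorem for geodesically convex sets on Hadamard manifolds replacing the classical Helly theorem. The centerpoint will be produced as a common point of
\[
\mathcal{F} \;=\; \bigl\{H \text{ a halfspace} : \mu(H) > \tfrac{n}{n+1}\bigr\},
\]
via three steps: (i) a finite intersection property for $\mathcal{F}$, (ii) a Helly-plus-compactness step giving $c \in \bigcap \mathcal{F}$, and (iii) a verification that $c$ is a $\tfrac{1}{n+1}$-centerpoint.

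Step (i) is purely measure-theoretic: for any $H_1,\dots,H_{n+1}\in\mathcal{F}$, subadditivity gives $\mu\bigl(\bigcap_i H_i\bigr)\ge 1-\sum_i\mu(H_i^c) > 1-(n+1)\cdot\tfrac{1}{n+1}=0$, so the intersection is nonempty. For step (ii), I would use the fact that in a Hadamard manifold finite intersections of geodesically convex sets remain geodesically convex and hence contractible, which allows a standard nerve-style proof of Helly's theorem for finite subfamilies. To pass from finite subfamilies to the full $\mathcal{F}$, I would thicken to $\mathcal{F}_\varepsilon=\{H:\mu(H)>\tfrac{n}{n+1}+\varepsilon\}$, extract a common point $c_\varepsilon$ for each $\varepsilon>0$, confine these to a bounded (hence, by Hopf--Rinow, precompact) region capturing most of the $\mu$-mass, and take a limit $c=\lim_{\varepsilon\downarrow 0} c_\varepsilon$. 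For step (iii), if $\mu(H_{c,v})<\tfrac{1}{n+1}$ for some halfspace based at $c$, then by absolute continuity of $\mu$ the opposite halfspace has mass $>\tfrac{n}{n+1}$, and a small perturbation that pushes its boundary off $c$ still has mass $>\tfrac{n}{n+1}$, producing an element of $\mathcal{F}$ not containing $c$, a contradiction.

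The containment $c\in S$ when $\supp(\mu)\subseteq S$ follows either by running the whole argument inside $S$ (itself a Hadamard space under the induced metric) or by observing that any $c\notin S$ would be separated from $\supp(\mu)$ by a halfspace of full $\mu$-mass, contradicting $c \in \bigcap \mathcal{F}$. For the sharpness claim, I would exhibit an extremal example patterned on the Euclidean one, namely a uniform measure on a convex set concentrated near the $n+1$ vertices of a geodesic simplex, for which the natural halfspaces realize the fraction $\tfrac{1}{n+1}$ in the limit. The main obstacle I anticipate is step (ii): producing Helly's theorem for an infinite family of geodesically convex sets on a Hadamard manifold, and handling the compactness passage from $\mathcal{F}_\varepsilon$ to $\mathcal{F}$. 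Once those are in place, the remaining steps follow the Gr\"unbaum template closely, with absolute continuity of $\mu$ absorbing the usual open-versus-closed boundary subtleties.
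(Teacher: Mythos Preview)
Your plan has a fundamental gap at step~(ii). The halfspaces $H_x(v)$ of Definition~\ref{def:halfspace} are \emph{not} geodesically convex on a general Hadamard manifold: they are images under $\exp_x$ of linear halfspaces in $T_xM$, and the boundary $\exp_x(\{w:\langle w,v\rangle_x=0\})$ is typically not a totally geodesic hypersurface once the curvature is nonconstant. The paper says this explicitly right after Definition~\ref{def:halfspace} and again just before Theorem~\ref{Helly} (``That the halfspace notion of Definition~\ref{def:halfspace} is not typically convex limits the applicability of this generalization of Helly's Theorem''). Consequently $\mathcal{F}$ is not a family of geodesically convex sets, and neither Theorem~\ref{Helly} nor a nerve argument based on contractibility of intersections applies to it. Step~(i) is fine, but without convexity you cannot pass from ``every $n+1$ members of $\mathcal{F}$ meet'' to ``all of $\mathcal{F}$ meets''; intersections of these halfspaces need not even be connected.

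This is exactly the obstacle that forces the paper onto a different route. Rather than invoking Helly on $M$, the paper fixes $x\in S$, pulls $\mu$ back to $T_xM\cong\mathbb{R}^n$ via $\exp_x^{-1}$, and applies the \emph{Euclidean} Gr\"unbaum theorem (Theorem~\ref{thm:classic}) there to obtain a convex set $U_x$ of Euclidean centerpoints for the pulled-back measure. A map $F:S\to S$ is built by sending $x$ to the projection onto $S$ of the point of $U_x$ nearest the origin; Lemma~\ref{lem:quasiconvex} supplies the Hausdorff continuity of $x\mapsto U_x$ needed to make $F$ continuous, and Brouwer's fixed point theorem produces a fixed point, which is then shown to be a $\tfrac{1}{n+1}$-centerpoint. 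The Helly/Gr\"unbaum machinery is used only in the tangent spaces, never on the halfspaces of $M$ themselves. Your sharpness sketch (a hyperbolic simplex) and the $c\in S$ argument via a separating halfspace do match the paper.
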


We noted the sharpness of $\frac{1}{n+1}$ in the above theorem because this contrasts with the guaranteed existence of $\frac{1}{e}$-centerpoints for the uniform measure on convex subsets of Euclidean space \cite{G}. 

\vspace*{.3cm}
The theorem leads to a bound on the number of subgradient oracle calls needed to optimize a function.

\begin{theorem}\label{thm:oracle_cuts}
Suppose a subset $S$ of Hadamard manifold $M$ of dimension $n$ is (geodesic) convex, and that $f:S \rightarrow \mathbb{R}$ is a (geodesic) convex $L$-Lipschitz  function.  Additionally assume the minimum of $f$, denote by $x_*$, is in the $\epsilon$-interior of $S$, meaning that the open ball centered at $x_*$ of Riemannian radius $\epsilon$ is contained within $S$.  Then it is possible to find a point $x \in S$ such that $f(x)-f(x_*) \leq \epsilon$ using $O(n^2 \log(n \, L \, \text{vol}_g(S) \epsilon^{-1}))$ subgradient oracle calls, where $\text{vol}_g(S)$ denotes the Riemannian volume of $S$.
\end{theorem}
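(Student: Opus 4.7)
I would run a cutting-plane method driven by Theorem \ref{thm:sharpness}, directly mimicking the Euclidean center-of-mass scheme but using Riemannian centerpoints and halfspaces.  Initialize $S_0 := S$.  At iteration $k$, apply Theorem \ref{thm:sharpness} to the uniform probability measure on $S_k$ (which is absolutely continuous with respect to $\text{vol}_g$ and whose support lies in the convex set $S$) to obtain a $\tfrac{1}{n+1}$-centerpoint $c_k$; query the subgradient oracle at $c_k$ to get $g_k$, and let $H_k$ be the halfspace at $c_k$ opposite to $g_k$.  By the Riemannian subgradient inequality combined with geodesic convexity of $f$, one has $x_* \in H_k$, so setting $S_{k+1} := S_k \cap H_k$ maintains the invariant $x_* \in S_k$.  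Moreover, the centerpoint property gives that the discarded halfspace contains at least a $\tfrac{1}{n+1}$ fraction of the volume of $S_k$, hence
\[ \text{vol}_g(S_k) \le \left(1-\tfrac{1}{n+1}\right)^{k} \text{vol}_g(S). \]

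The next step is to show that, if no $c_i$ queried so far is $\epsilon$-optimal, then a fixed geodesic ball around $x_*$ stays inside every $S_k$.  Assume WLOG that $L \ge 1$, and let $r := \epsilon/L$ and $B := B(x_*, r)$, so that the $\epsilon$-interior hypothesis gives $B \subseteq S$.  For any $x \in B$, $L$-Lipschitzness gives $f(x) < f(x_*) + \epsilon < f(c_i)$; the subgradient inequality $f(x) \ge f(c_i) + \langle g_i, \log_{c_i}(x)\rangle$ then forces $\langle g_i, \log_{c_i}(x)\rangle < 0$, so $x \in H_i$.  Iterating over $i \le k$ yields $B \subseteq S_k$.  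The Rauch/Bishop--Gromov comparison theorem on a Hadamard manifold gives $\text{vol}_g(B) \ge \omega_n r^n$, where $\omega_n$ is the Euclidean unit-ball volume.

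Combining the decay with $B \subseteq S_k$, the containment can persist only while $\omega_n (\epsilon/L)^n \le (1 - 1/(n+1))^k \text{vol}_g(S)$.  Since $\log(1 - 1/(n+1))^{-1} = \Theta(1/n)$ and $\log \omega_n^{-1} = O(n \log n)$, this inequality fails once $k = O\!\left(n^2 \log(n L \, \text{vol}_g(S)\, \epsilon^{-1})\right)$.  At that point some $c_i$ with $i \le k$ must satisfy $f(c_i) - f(x_*) \le \epsilon$, and we return the best iterate.

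I expect the main obstacle to be verifying that the subgradient cut really is a halfspace in the precise sense used by Theorem \ref{thm:sharpness} (as defined in Section \ref{sec:defs}), so that the $\tfrac{1}{n+1}$-mass guarantee applies to the discarded side; and, correspondingly, that the containment $B \subseteq H_i$ follows in that same notion of halfspace rather than only in the heuristic inner-product form used above.  A secondary technical check is that each $S_k$ is admissible as a support for Theorem \ref{thm:sharpness}; since $S_k \subseteq S$ and $S$ is assumed geodesically convex, one may take $S$ itself as the enclosing convex set so that the centerpoint lies in $S$ and the subgradient oracle can be queried there.
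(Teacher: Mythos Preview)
Your proposal is correct and follows essentially the same approach as the paper: iterative centerpoint cuts via Theorem~\ref{thm:sharpness} shrink the feasible volume geometrically by the factor $1-\tfrac{1}{n+1}$, and Bishop--Gromov volume comparison on the ball $B(x_*,\epsilon/L)$ (together with the Lipschitz bound and Lemma~\ref{lem:cutting_plane}) provides the stopping criterion.  The paper packages your ball-containment step as a separate lemma (Lemma~\ref{lem:cut_ball}), phrased contrapositively (once the remaining volume drops below that of the ball, some cut's complement must meet the ball), but the content and the resulting iteration count are identical.
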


\subsection{Definitions and Notation}\label{sec:defs}
In this section we give the definitions needed to frame the problem and results. Only basic notions of Riemannian geometry are needed in this paper; these are surveyed in Appendix \ref{app:riemannian}, with the present section mainly providing non-standard or less common definitions.  

\vspace*{.3cm}
For the remainder of this paper we study triples $(M, g, \mu)$, where $M$ is an n-dimensional, simply-connected manifold equipped with a complete Riemannian metric $g$ of non-positive sectional curvature. Such Riemannian manfolds $(M,g)$ are called Hadamard manifolds. For each point $x \in M$ we denote by $\langle \cdot, \cdot \rangle_x$ the inner product, defined by $g$, on the tangent space $T_xM$. The metric $g$ also induces the Riemannian volume measure, denoted by $\text{vol}_g$.  In addition to this, we consider a probability measure $\mu$, which we assume to be absolutely continuous with respect to $\text{vol}_g$. For the motivating application $\mu$ is taken to be the Riemannian volume measure restricted to a subset $S \subset M$, i.e. $\mu = \frac{\mathbbm{1}_{S}}{\text{vol}_g(S)} \cdot  \text{vol}_g $. The metric $g$ further induces a metric between points on the manifold, allowing us to define the notion of geodesics, which are locally length minimizing paths. For $x \in M$ we denote by $\exp_x: T_xM \rightarrow M$ the exponential map based at $x$, which maps tangent vectors to geodesics passing through $x$. As explained in the Riemannian geometry overview in Appendix \ref{app:riemannian}, the exponential map is a diffeomorphism when $(M,g)$ is a Hadamard manifold.

\vspace*{.3cm}
We proceed with definitions related to convexity,

\begin{definition}\label{def:convex_set}
We say that $S \subset M$ is \textbf{convex} if points $x, y \in S$ are joined by a unique length-minimizing geodesic contained in $S$.
\end{definition}

\begin{definition}\label{def:gconvex}
A function $f:M\rightarrow \mathbb{R}$ is \textbf{convex} on its domain if its restrictions to geodesics are convex in $t$.  That is, $f(\exp_x(t v)):\mathbb{R} \rightarrow \mathbb{R}$ is a convex function in $t$.

\vspace*{.3cm}
For such a convex function $f$, a tangent vector $w \in T_xM$ is said to be a \textbf{subgradient} at $x$ if for any $v\in T_xM$,
\[
f(\exp_x(t v)) \geq f(x) +  t \langle w , v \rangle_{x} \, .
\]
The set of subgradients at $x$ is known as the \textbf{subdifferential} at $x$, and is denoted by $\partial f_x$. 
\end{definition}

We note Theorem 4.5 in \cite{Udriste} proves that convex functions have a non-empty subdifferential at all points.  Accordingly, when the convex function $f$ is discussed, we will assume a subgradient oracle that for any $x$ outputs some $w \in \partial f_x$.  As explained in Appendix \ref{app:riemannian}, the gradient of a differentiable convex function is a subgradient.  Therefore the subradient oracle for such a function can be explicit. 

\vspace*{.3cm}
Let us present two examples of convex functions for motivation.  The first is general, the second specific.

\begin{itemize}
\item The distance to a convex subset (Definition \ref{def:convex_set}) of a Hadamard manifold is convex \cite{B}.  Thus finding the point minimizing the mean distance or mean squared-distance to a set of points is a convex optimization problem.

\item Identify $SL_n / SO_n$ with the set of positive-definite matrices of determinant 1 \cite{B}.  Then for arbitrary $B_i \in GL_n$, the function 
\[
\log \det \left (\sum\limits_{i=1}^m B_i^T X B_i \right)
\] defined on $SL_n / SO_n$ is convex \cite{SH}.  Minimizing such a function can be used to find the optimal Brascamp-Lieb constant \cite{BCCT}.  Up to scaling, all symmetric spaces of non-compact type (examples of Hadamard manifolds) embed as totally geodesic submanifolds of these spaces.  Therefore restrictions of this function to such submanifolds give many more examples.  Minimizing this function has figured prominently in recent theoretical computer science research, notably in \cite{AGLOW}.  Their work succeeds in developing an optimizaton procedure depending polynomially on $\log(\epsilon^{-1})$ for such functions, but the approach relies on special properties of this family of functions.
\end{itemize}

\vspace*{.3cm}
With these examples in mind, let us return for two more important definitions, 

\begin{definition}\label{def:halfspace}
An open \textbf{halfspace} based at $x \in M$ is formed by applying $\exp_x(\cdot)$ to a halfspace of $T_xM$. We denote halfspaces by
\[
H_x(v) := \{ \exp_x(w) \, | \, w \in T_xM, \, \langle w, v \rangle_x	< 0 \} \, ,
\]
for a given $v \in T_xM$.
\end{definition}

Although such halfspaces are not convex sets in the general setting of Hadamard manifolds, they are naturally produced by cutting planes for convex functions.  This notion of cutting plane is justified by the following lemma,

\begin{lemma}\label{lem:cutting_plane}
Consider a convex function $f:S \rightarrow \mathbb{R}$, where $S$ is a convex subset of Hadamard manifold $M$.  Then for any $x \in S$ and any subgradient $v \in \partial f_x$, the minimum of $f$ within $S$ is either attained at $x$ or lies within $H_x(v) \cap S$.  Moreover, if $y \in S \backslash H_x(v)$, then $f(y) \geq f(x)$.
\end{lemma}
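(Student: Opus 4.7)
The plan is to prove the sharper second assertion first, since the ``either/or'' statement about minimizers follows immediately from it. The proof rests on two basic Hadamard facts: the exponential map at any point is a diffeomorphism (so every $y \in M$ has a unique preimage $w = \exp_x^{-1}(y)$), and for a convex $S$ in the sense of Definition \ref{def:convex_set}, the unique length-minimizing geodesic connecting two points of $S$ stays in $S$.

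Fix $y \in S \setminus H_x(v)$, and let $w \in T_xM$ be the unique tangent vector with $\exp_x(w) = y$. The geodesic $t \mapsto \exp_x(tw)$, $t \in [0,1]$, is the unique length-minimizing geodesic from $x$ to $y$, so by convexity of $S$ it lies entirely in $S$. In particular, $f$ is defined all along it, and the subgradient inequality from Definition \ref{def:gconvex} can be applied with this $w$ at $t = 1$:
\[
f(y) = f(\exp_x(w)) \geq f(x) + \langle w, v \rangle_x.
\]
By definition of $H_x(v)$, the assumption $y = \exp_x(w) \notin H_x(v)$ forces $\langle w, v \rangle_x \geq 0$ (otherwise $y$ would lie in $H_x(v)$). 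Thus $f(y) \geq f(x)$, proving the second statement.

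For the first statement, suppose $y_* \in S$ attains the minimum of $f$ on $S$. Either $y_* \in H_x(v) \cap S$, in which case we are done, or $y_* \in S \setminus H_x(v)$, in which case the second statement gives $f(x) \leq f(y_*)$; combined with the minimality $f(y_*) \leq f(x)$ this yields $f(x) = f(y_*)$, so the minimum is attained at $x$.

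The only conceptual step to guard against is applying the subgradient inequality at $t = 1$ on a domain where $f$ might not be defined; convexity of $S$ together with the diffeomorphism property of $\exp_x$ ensures that the entire segment $\{\exp_x(tw) : t \in [0,1]\}$ lies in $S$, so there is no real obstacle beyond unpacking the definitions carefully.
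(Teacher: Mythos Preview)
Your proof is correct and follows essentially the same approach as the paper: both take $w=\exp_x^{-1}(y)$, observe that $y\notin H_x(v)$ forces $\langle w,v\rangle_x\ge 0$, and then apply the subgradient inequality to conclude $f(y)\ge f(x)$. You simply spell out more of the routine details (that the geodesic stays in $S$, and the deduction of the first assertion from the second) than the paper does.
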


\begin{proof}
If $y \notin H_x(v)$, the corresponding $v' = \exp_x^{-1}(y)$ satisfies 
\[
\langle v', v \rangle_x \geq 0 \, ,
\]
and we have 
\[
f(y) \geq f(c) + \langle v, v'\rangle_x \geq f(x) .
\]
\end{proof}

Cutting plane methods need to find a point for which no halfspace based at that point has too much of the feasible set's volume.  This can be captured through the notion of a centerpoint.  Our definition of centerpoint technically could be applied to any probability measure on any space with a notion of halfspace.  However, in proving Theorem \ref{thm:sharpness}, we further restrict to the halfspaces we defined for Hadamard manifolds, and require the probability measure to be absolutely continuous with respect to the Riemannian volume measure $\text{vol}_g$.

\begin{definition}\label{def:centerpoint}
A $\beta$-\textbf{centerpoint} of the probability measure $\mu$ on a Hadamard manifold $M$ is a point $c$ such that 
\[
\mu(H_c(v)) \leq 1-\beta \, ,
\]
for all $v \in T_c M$.
\end{definition}

Theorem \ref{thm:sharpness} claimed that even for the uniform measure on convex subsets of $M$, a $\frac{1}{n+1}$-centerpoint is the best we can guarantee. We included this comment both to contrast with $\mathbb{R}^n$, as well as to include a concrete illustration.  It is not difficult to present such an example through studying $\mathbb{H}^n$, the model space of constant $-1$ sectional curvature.

\vspace*{.3cm}
Towards this end, let us briefly state the important features of the Klein model of $\mathbb{H}^n$ that we require.  We identify $\mathbb{H}^n$ with the open Euclidean unit ball $B(1)\subset \mathbb{R}^n$, which we take to be centered at the origin.  This set is equipped with the metric $g = \frac{dx_1^2+\dots+dx_n^2}{1-x_1^2-\cdots-x_n^2}$.  This leads to a volume form of 
\[
\text{vol}_g = \frac{1}{(1-x_1^2-\cdots-x_n^2)^{\frac{n+1}{2}}}dx^1\wedge\cdots \wedge dx^n \, .
\]  

Critically for our exposition, this model of hyperbolic space has its geodesics appear as Euclidean lines; thus Riemannian halfspaces appear as halfspaces intersected with $B(1)$.  Other work such as \cite{BE} has found this useful in studying convex objects in $\mathbb{H}^n$.

\vspace*{.3cm}
For the construction demonstrating that $\frac{1}{n+1}$ cannot in general be improved in Theorem \ref{thm:sharpness}, the idea is simply that convex polyhedra in $\mathbb{H}^n$ have their volume concentrated towards the vertices.  Let $T(1)$ be the closed, regular $n$-simplex inscribed in $B(1)$.  It can be checked that $T(1)$ has finite volume; such objects are called ideal polyhedra and have been studied extensively. By symmetry, we will see that the origin $\vec{0}$ is the optimal centerpoint for $T(1)$.  However, note that a hyperplane through $\vec{0}$ parallel to any of the faces will contain exactly $1$ of the $n+1$ vertices.  An application of the Gauss-Bonnet theorem can be used to check that in the case of $\mathbb{H}^2$, the area of the halfspace containing a single vertex is $\frac{\pi}{3}$.  The halfspace containing the other two vertices is of area $\frac{2 \pi}{3}$. The author found this calculation to be easier to carry out in a conformal model such as the upper half-plane model, and conjectures the result to hold as well for higher dimensions.

\vspace*{.3cm}
For our purpose, it is more direct to modify the example slightly,

\begin{proposition}\label{prop:tight}
Let $T(1+\delta)$ be a closed, regular $n$-simplex inscribed in $B(1+\delta)$.  We take $\delta > 0$ small enough so that $B(1)$ is not inscribed in $T(1+\delta)$.  Also define $S_{\epsilon} = T(1+\delta) \cap B(1-\epsilon)$, which we observe to be convex and compact.  Then the optimal centerpoint of $S_{\epsilon}$ approaches being a $\frac{1}{n+1}$-centerpoint as $\epsilon \rightarrow 0$.

\end{proposition}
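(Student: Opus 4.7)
The plan is to show $\beta^*(\epsilon) := \sup_{c \in S_\epsilon}\bigl(1 - \sup_v \mu_\epsilon(H_c(v))\bigr)$ tends to $\tfrac{1}{n+1}$ as $\epsilon \to 0$, where $\mu_\epsilon$ is the normalized Riemannian volume on $S_\epsilon$. Theorem \ref{thm:sharpness} already gives $\beta^*(\epsilon) \geq \tfrac{1}{n+1}$, so only the matching upper bound needs proof. First I would show weak convergence $\mu_\epsilon \rightharpoonup \mu_0$ on the Euclidean closure $\overline{T(1+\delta)}$, where $\mu_0$ is a probability measure supported on the $n+1$ ``ideal caps'' $C_i := T(1+\delta)\cap\partial B(1)$ with mass $\tfrac{1}{n+1}$ on each. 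The Klein density $(1-|x|^2)^{-(n+1)/2}$ diverges on $\partial B(1)$, and the hypothesis that $B(1)$ is not inscribed in $T(1+\delta)$ ensures that the only boundary-adjacent pieces of $\overline{S_\epsilon}$ are precisely the $n+1$ caps. The symmetric group $S_{n+1}$ acts on $T(1+\delta)$ as Euclidean isometries through the origin, hence as Riemannian isometries of $\mathbb{H}^n$; partitioning $T(1+\delta)$ into the $n+1$ vertex Voronoi cells (permuted transitively by $S_{n+1}$) makes each cell carry equal $\mu_\epsilon$-mass $\tfrac{1}{n+1}$ for every $\epsilon$, while inside each cell the bulk region (bounded away from $\partial B(1)$) contributes $o(1)$ against the diverging cap contribution as $\epsilon \to 0$.

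Next, because geodesics in the Klein model are Euclidean chords, Riemannian halfspaces at $c$ coincide with intersections with $B(1)$ of Euclidean halfspaces whose boundary passes through $c$. I then prove a separation lemma: for every $c \in \overline{T(1+\delta)}$, writing $c = \sum_i \alpha_i v_i$ in barycentric coordinates (with the centroid at the origin) and picking any $i^*$ with $\alpha_{i^*} \geq \tfrac{1}{n+1}$ (which exists by pigeonhole), the Euclidean hyperplane through $c$ perpendicular to $v_{i^*}$ strictly separates $C_{i^*}$ from $\bigcup_{j\neq i^*} C_j$. The computation uses $\langle v_i, v_j\rangle = -|v|^2/n$ for $i\neq j$ to reduce the check to comparing $\langle c, v_{i^*}\rangle$, which lies in $[0, 1+\delta]$, against the extremal $v_{i^*}$-projections of the caps, explicitly computable by solving $|x|^2 = 1$ on the edges of $T(1+\delta)$. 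For $\delta$ small enough, these extremal values sit strictly outside $[0, 1+\delta]$, the regime already guaranteed by the statement.

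For the conclusion, let $c_\epsilon \in S_\epsilon$ be an optimal centerpoint (existing by upper semicontinuity of $c \mapsto \sup_v \mu_\epsilon(H_c(v))$ and compactness). Since $|c_\epsilon| \leq 1-\epsilon < 1+\delta = |v_i|$, the sequence stays bounded away from the vertices, so I extract a subsequence with $c_\epsilon \to c_0 \in \overline{T(1+\delta)}\cap\overline{B(1)}$. Applying the separation lemma to $c_0$ yields a hyperplane $H_0$ isolating some cap $C_{i^*}$, and the parallel hyperplane $H_\epsilon$ through $c_\epsilon$ still strictly separates for $\epsilon$ small. The boundary of $H_\epsilon$ is $\mu_0$-null (being strictly disjoint from every $C_i$), so Portmanteau combined with the first step yields $\mu_\epsilon\bigl(\text{majority side of }H_\epsilon\bigr) \to \tfrac{n}{n+1}$, hence $\beta^*(\epsilon) = \beta_{c_\epsilon}(\epsilon) \leq \tfrac{1}{n+1}+o(1)$.

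I expect the main obstacle to be making the weak convergence in the first step rigorous: the total mass $\text{vol}_g(S_\epsilon)$ is divergent in the limit, and one must verify $\mu_\epsilon(A) \to \mu_0(A)$ against continuous test functions in the face of that singular normalization. The $S_{n+1}$-symmetry reduces equidistribution to a single divergent-vs-bounded comparison per cap, after which the calculation is an elementary integral of the Klein density in polar coordinates based at each vertex.
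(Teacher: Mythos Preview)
Your argument is essentially sound but takes a considerably more elaborate route than the paper's. The paper's key shortcut is to use symmetry together with quasi-convexity to identify the optimal centerpoint \emph{exactly} as the origin, for every $\epsilon$: the centrality function $G(y)=\sup_{\hat v}\mu_\epsilon(H_y(\hat v))$ is quasi-convex on $\mathbb{R}^n$ (Lemma~\ref{lem:quasiconvex}, using that Klein geodesics are Euclidean lines), so its minimizers form a convex set; the $S_{n+1}$ symmetry of $S_\epsilon$ then forces the convex hull of any optimizer's orbit, which contains $\vec 0$, to consist of optimizers as well. With the optimal centerpoint pinned at $\vec 0$, the paper simply takes the hyperplane through $\vec 0$ parallel to a face and observes that, since the vertex neighborhoods diverge in volume at equal rates, the single-vertex side carries mass $\tfrac{1}{n+1}+o(1)$.

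You instead decline to locate the optimizer, proving a separation lemma valid at every candidate point (via barycentric pigeonhole) and then passing to a subsequential limit $c_0$ of optimizers, combined with weak convergence $\mu_\epsilon\rightharpoonup\mu_0$ and Portmanteau. This is more robust in principle, but it buys you several technical debts the paper avoids entirely: you must handle Portmanteau with $\epsilon$-dependent halfspaces; you must say what $\mu_0$ actually is on each cap to verify the boundary is $\mu_0$-null; and your strict-separation claim is not quite right at the stated generality, since for $c_0\in C_{i^*}$ one has $\langle c_0,v_{i^*}\rangle$ attaining the cap's extremal value (e.g.\ at $v_{i^*}/|v_{i^*}|$), so the hyperplane through $c_0$ cuts $C_{i^*}$ rather than strictly isolating it. None of this is fatal (the majority side still contains the other $n$ caps, which suffices), but the paper's symmetry-plus-quasi-convexity move dispatches all of it in a line. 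Also, invoking Theorem~\ref{thm:sharpness} for the lower bound is mildly circular in presentation; you mean its existence half (Proposition~\ref{prop:cuts}), proved independently of this proposition.
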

\begin{proof}
It is clear that $S=T(1+\delta)\cap B(1)$ has unbound Riemannian volume, because each of the $n+1$ vertices of $T(1+\delta)$ lies outside of $B(1)$.  However, $S_{\epsilon} = T(1+\delta) \cap B(1-\epsilon)$ has finite volume for any $\epsilon>0$, and is convex and compact.  Define the set of probability measures consisting of the uniform measure restricted to $S_{\epsilon}$, i.e. $\mu_{\epsilon} = \frac{\mathbbm{1}_{S_{\epsilon}}}{\text{vol}_g(S_{\epsilon})} \cdot  \text{vol}_g $.  

\vspace*{.3cm}
By symmetry, the origin $\vec{0}$ is the optimal centerpoint for each $\mu_{\epsilon}$.  In more detail, the optimal centerpoint for $S_{\epsilon}$ is the solution to minimizing $G(y):= \sup\limits_{\hat{v} \in S^{n-1}} \mu_{\epsilon}(H_y(\hat{v})$.  Viewing $G(y)$ as a function on $S_{\epsilon} \subset \mathbb{R}^n$, Lemma \ref{lem:quasiconvex} establishes that it is quasi-convex. Again, this is possible because in this model of $\mathbb{H}^n$, geodesics appear as Euclidean straight lines.  One characterization of quasi-convex is $f(tx + (1-t) y) \leq \max(f(x), f(y))$ for all $0 \leq t \leq 1$. Suppose $x_* \neq \vec{0}$ is the optimal centerpoint.  Because $S_{\epsilon}$ exhibits tetrahedral symmetry, we see there are $n+1$ optimal centerpoints, and their convex hull includes $\vec{0}$.  Using quasi-convexity, any points in the convex hull of these $n+1$ optimal centerpoints must also be optimal.  This proves $\vec{0}$ is the optimal centerpoint, using the symmetry of the set $S_{\epsilon}$ and quasi-convexity of the centerpoint function $G(y)$.

\vspace*{.3cm}
Now consider a hyperplane through $\vec{0}$ parallel to one of the faces of $S_{\epsilon}$, and denote by $H^+$ the resulting halfspace containing only one of the vertices of $T(1+\delta)$.  Because the volumes close to the vertices of $T(1+\delta)$ diverge at equal rates, it follows that as $\epsilon \rightarrow 0$, 
\[
\mu_{\epsilon}(H^+) = \text{vol}_g(H^+ \cap S_{\epsilon})/\text{vol}_g(S_{\epsilon})\rightarrow \frac{1}{n+1} .
\]
\end{proof}
This concludes our proof of the sharpness of Theorem \ref{thm:sharpness}.

\subsection{Overview and Conclusion}
The remainder of this paper is organized as follows:

\begin{itemize}
\item Section \ref{sec:centerpoint_existence} analyzes the existence of centerpoints on Hadamard manifolds.
\item Section \ref{sec:application} presents the brief application of the above to upper bound subgradient oracle complexity.
\item Appendix \ref{app:riemannian} recalls the relevant notions of Riemannian geometry and provides references.
\end{itemize}

To be clear, the problem of developing an efficient optimization procedure is far from resolved.  However, our results show that there is not an information theoretic obstacle to developing cutting plane methods for Hadamard manifolds.

\vspace*{.3cm}
We hope our main result is of interest and encourages others to study centerpoints in the manifold setting.  Targeting optimization procedures, we believe focusing on the spaces $SL_n / SO_n$ would be of greatest interest, both for theory and applications.  Computing a centerpoint from a discrete point set would be a notable advancement.  It would also be useful to be able to sample from the Riemannian volume restricted to a convex subset.

\section{Existence of Centerpoints}\label{sec:centerpoint_existence}
One might wonder if the centroid of a convex set of a manifold is an adequate centerpoint.  Here centroid refers to the center of mass, the point minimizing the average squared-distance.  After all, the centroid of a convex subset of $\mathbb{R}^n$ is an approximately optimal centerpoint \cite{G}.  However, this is tied closely to the fact that cross-sectional areas of a convex set in $\mathbb{R}^n$ follow a log-concave probability distribution - a consequence of the Brunn-Minkowski inequality.  On the otherhand, for a manifold with negative sectional curvature, the distribion of cross-sectional areas is not necessarily even unimodal. This reflects the fact that manifold versions of the Brunn-Minkowski inequality use curvature lower bounds as parameters, and are qualitatively different in negative curvature compared to $\mathbb{R}^n$ \cite{CE2001}. Helly's Theorem is somewhat the opposite, as it holds in situations in which the distance function is convex. Moreover, as cited in Appendix \ref{app:riemannian}, Hadamard manifolds have convex distance functions. One can find in \cite{LTZ} and \cite{I} proofs that amount to:

\begin{theorem}\label{Helly}
Let $M$ be an $n$-dimensional Riemannian manifold of non-positive sectional curvature. Suppose we are given a convex compact set $C$ and a family $\{C_{\alpha}\} \subset C$ of closed convex sets.  Then if for an arbitrary selection of $n+1$ sets $C_{\alpha_1} \cap \dots \cap C_{\alpha_{n+1}} \neq \emptyset$, it follows that $\cap_{\alpha} C_{\alpha} \neq \emptyset$
\end{theorem}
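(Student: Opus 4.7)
The plan is to reduce to a finite subcollection and then extract a contradiction from a variational argument on the maximum of the distance functions. For the reduction, suppose $\bigcap_\alpha C_\alpha = \emptyset$; since $C$ is compact and each $C_\alpha$ is closed, the open sets $C \setminus C_\alpha$ cover $C$, so a finite subfamily $C_{\alpha_1}, \ldots, C_{\alpha_N}$ already has empty intersection. It therefore suffices to prove the theorem assuming only finitely many closed convex sets $C_1, \ldots, C_N$ are present, still with every $n+1$ of them meeting.

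The main step is to study $f : C \to \mathbb{R}$ defined by $f(x) = \max_{1 \le i \le N} d(x, C_i)$. In a Hadamard manifold the distance to a closed convex set is convex (this is the crucial use of non-positive curvature), and the pointwise maximum of convex functions is convex, so $f$ is continuous and convex. It attains its infimum on the compact set $C$ at some $x^*$ with value $r \ge 0$; if $r = 0$ the proof is finished, so I would suppose $r > 0$ and derive a contradiction. Let $I = \{ i : d(x^*, C_i) = r \}$ be the active set. For each $i \in I$ the projection $p_i$ of $x^*$ onto $C_i$ is unique, and writing $\hat{v}_i = \exp_{x^*}^{-1}(p_i)/r$, the gradient of $d(\cdot, C_i)$ at $x^*$ equals $-\hat{v}_i$. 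The first-order optimality condition for a minimizer of a max of convex functions then forces $0 \in \operatorname{conv}\{ -\hat{v}_i : i \in I \}$, equivalently $0 \in \operatorname{conv}\{ \hat{v}_i : i \in I \}$.

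Now I would invoke Carath\'eodory's theorem in the tangent space $T_{x^*} M \cong \mathbb{R}^n$ to extract indices $i_1, \ldots, i_{n+1} \in I$ and weights $\lambda_k \ge 0$ summing to $1$ with $\sum_k \lambda_k \hat{v}_{i_k} = 0$. The Helly hypothesis supplies a common point $q \in C_{i_1} \cap \cdots \cap C_{i_{n+1}}$; setting $u = \exp_{x^*}^{-1}(q)$, for each $k$ the convex function $t \mapsto d(\exp_{x^*}(tu), C_{i_k})$ equals $r$ at $t = 0$ and $0$ at $t = 1$, so its right derivative at $0$ is at most $-r$, which rearranges to $\langle u, \hat{v}_{i_k} \rangle_{x^*} \ge r$. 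Averaging against the weights,
$$ 0 = \Bigl\langle u, \sum_k \lambda_k \hat{v}_{i_k} \Bigr\rangle_{x^*} = \sum_k \lambda_k \langle u, \hat{v}_{i_k} \rangle_{x^*} \ge r , $$
contradicting $r > 0$.

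The main obstacle is rigorously justifying the Riemannian first-order calculus used above: the gradient formula for the distance function to a closed convex set at points outside it, and the description of the subdifferential of a pointwise max of convex functions as the convex hull of the active gradients. Both carry over from the Euclidean setting because Hadamard manifolds have convex (squared) distance functions and unique nearest-point projections onto closed convex sets, but verifying these ingredients carefully is where the technical work resides.
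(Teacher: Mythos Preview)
The paper does not actually prove this theorem; it merely quotes the statement and cites \cite{LTZ} and \cite{I} for the proof, noting that \cite{I} establishes it in the generality of $\mathrm{CAT}(0)$ geodesic spaces. So there is no in-paper argument to compare against.

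Your variational argument is correct and is in fact the standard route taken in the cited literature: minimize $f(x)=\max_i d(x,C_i)$, extract the active gradients at the minimizer, apply Carath\'eodory in the tangent space, and use the hypothesis on the resulting $n+1$ sets to contradict $r>0$. Two small points are worth flagging. First, the theorem hypothesizes only non-positive sectional curvature, not simple connectedness; your invocation of ``Hadamard'' facts (unique projections, convexity of $d(\cdot,C_i)$) is still legitimate because everything takes place inside the convex compact set $C$, which is intrinsically $\mathrm{CAT}(0)$. Second, the optimality condition $0\in\operatorname{conv}\{\hat v_i:i\in I\}$ is stated as though $x^*$ were an interior minimizer; on the boundary of $C$ one a priori only gets $0\in\operatorname{conv}\{-\hat v_i\}+N_C(x^*)$. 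This gap closes because each $\hat v_i$ lies in the tangent cone $T_C(x^*)$ (it points toward $p_i\in C$), so if $0\notin\operatorname{conv}\{\hat v_i\}$ the nearest point $w$ of that hull to the origin also lies in $T_C(x^*)$ and gives a feasible descent direction, contradicting minimality. With that remark your proof is complete.
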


The paper \cite{I} actually proves this result for $\text{Cat}(0)$ geodesic spaces. 

\vspace*{.3cm}
That the halfspace notion of Definition \ref{def:halfspace} is not typically convex limits the applicability of this generalization of Helly's Theorem. The remainder of this section proves a result that could be a considered a Riemannian variant of the well known corollary of Helly's theorem cited as Theorem \ref{thm:classic}.  To generalize that result, we rely on a few simple regularity properties of sets of Euclidean centerpoints, which we now collect. In the following lemma, the halfspaces are Euclidean halfspaces, and $D$ is the Hausdorff distance. That is,
 \[
 D(A,B) := \max \{ \sup\limits_{b \in B}\, \inf \limits_{a \in A} |a-b|\, , \, \sup\limits_{a \in A}\, \inf \limits_{b \in B} |a-b| \},
 \]
 where $|\cdot|$ denotes the Euclidean norm.  In other words, $D(A,B)$ is the minimal value $\epsilon$ so that $A$ is contained in the $\epsilon$-fattened version of $B$ and vice versa.  Also recall the total variation distance between probability distributions is
 \[
 \sup\limits_{A \in \mathcal{F}} |\mu_1(A)-\mu_2(A)|,
 \]
where $|\cdot|$ denotes the absolute value, as there will be no confusion.  Here $A$ can be any measurable set, the collection of which is labeled $\mathcal{F}$.

\begin{lemma}\label{lem:quasiconvex}
Let $\{\mu_x(\cdot)\}$ be a family of probability measures on $\mathbb{R}^n$ that share a compact support $Y$. Assume the measures are indexed by members $x$ of a compact metric space $X$ with metric $d$, and the measures $\mu_x(\cdot)$ vary continuously with respect to total variation distance. Define the Euclidean centrality function $G: X \times Y \rightarrow \mathbb{R}$ by
\[
G(x, y) := \sup\limits_{\hat{v} \in S^{n-1}} \mu_x(H_y(\hat{v})),
\]
in order to measure how good of a centerpoint $y$ is for distribution $\mu_x$. Then $G$ is continuous under the product topology and $G(x, \cdot)$ is a quasi-convex function for a fixed $x$. Fixing an arbitrary $\alpha >0$, also define the sets
\[
U_x := \left\{y \in \mathbb{R}^n \, | \, G(x, y) \in \left(0, 1-\frac{1}{n+1}+\alpha \right] \right\}
\]
in order to explicitly propose the set of centerpoints for distribution $\mu_x$.  For any $x$, these sets have a non-empty interior.  Moreover fix $x \in X$ and suppose $\supp(\mu_x)$ is a connected set.  Then $x_i \rightarrow x$, $D(U_{x_i}, U_{x}) \rightarrow 0$.
\end{lemma}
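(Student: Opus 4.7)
The plan is to treat the four claims in order of increasing difficulty. Quasi-convexity of $G(x,\cdot)$ is immediate: for fixed $\hat{v}\in S^{n-1}$, the halfspace $H_y(\hat{v})$ depends on $y$ only through the scalar $\langle y, \hat{v}\rangle$, so $y\mapsto \mu_x(H_y(\hat{v}))$ is monotone along lines parallel to $\hat{v}$ and constant on hyperplanes orthogonal to $\hat{v}$, hence quasi-convex; and the supremum of quasi-convex functions is quasi-convex. Joint continuity of $G$ on $X\times Y$ splits into an $x$-part and a $y$-part. The first follows from the elementary bound $|G(x_i,y)-G(x,y)| \le \sup_{\hat{v}}|\mu_{x_i}(H_y(\hat{v}))-\mu_x(H_y(\hat{v}))|\le d_{\mathrm{TV}}(\mu_{x_i},\mu_x)$, which yields uniform convergence in $y$. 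The second follows because $H_y(\hat{v})$ varies continuously in symmetric difference on the compact set $Y$ as $(y,\hat{v})$ moves in $Y\times S^{n-1}$, and taking a supremum over the compact set $S^{n-1}$ preserves continuity.

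Non-emptiness of the interior of $U_x$ is where the classical Gr\"unbaum theorem (Theorem \ref{thm:classic}) enters: applied to $\mu_x$ on $\mathbb{R}^n$ it produces a point $c$ with $G(x,c)\le 1-\frac{1}{n+1}$, which is strictly less than the threshold $1-\frac{1}{n+1}+\alpha$ since $\alpha>0$. By joint continuity, $G(x,\cdot)$ remains strictly below the threshold on a neighborhood of $c$, and that neighborhood lies in $U_x$ (the positivity constraint $G>0$ is vacuous away from the degenerate case in which $\mu_x$ is a single atom, a case ruled out in what follows by the connectedness hypothesis).

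For the Hausdorff convergence $D(U_{x_i},U_x)\to 0$, the outward direction is straightforward: any accumulation point $y^*$ of a sequence $y_i\in U_{x_i}$ (extracted by compactness of $Y$) satisfies $G(x,y^*)\le 1-\frac{1}{n+1}+\alpha$ by joint continuity, and $G(x,y^*)>0$ by non-triviality of $\mu_x$, so $y^*\in U_x$. The inward direction requires approximating an arbitrary $y\in U_x$ by points in $U_{x_i}$. By quasi-convexity, $U_x$ is convex, and a convex set with nonempty interior equals the closure of its interior, so it suffices to approximate points at which $G(x,\cdot)<1-\frac{1}{n+1}+\alpha$ strictly; for any such point, joint continuity immediately places it in $U_{x_i}$ for all large $i$. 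The main obstacle I foresee is establishing that $\mathrm{int}(U_x)$ actually coincides with the open sublevel set $\{G(x,\cdot)<1-\frac{1}{n+1}+\alpha\}$, equivalently that $G(x,\cdot)$ does not sit on an open plateau at the threshold. This is where connectedness of $\supp(\mu_x)$ is essential: on a connected support, sliding $y$ in a direction normal to a supremum-achieving hyperplane sweeps that hyperplane across a region of positive $\mu_x$-mass, so the threshold value cannot be maintained in a whole neighborhood. Combined with quasi-convex segments from any threshold point to the Gr\"unbaum centerpoint $c$ (where $G(x,c)$ lies strictly below the threshold), a diagonal argument then yields approximants in $U_{x_i}$ arbitrarily close to $y$.
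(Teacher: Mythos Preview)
Your proposal is correct and follows essentially the same route as the paper: quasi-convexity via sublevel sets of the individual $y\mapsto\mu_x(H_y(\hat v))$, joint continuity via uniform continuity of $(x,y,\hat v)\mapsto\mu_x(H_y(\hat v))$ on a compact domain, nonempty interior from Gr\"unbaum plus continuity, and Hausdorff convergence by compactness in one direction and, in the other, reducing to interior points of $U_x$ and showing these satisfy the strict inequality $G(x,\cdot)<1-\tfrac{1}{n+1}+\alpha$ via the connected-support argument. The paper carries out your ``sliding'' step exactly as you anticipate: if an interior point $y$ of $U_x$ had $G(x,y)$ equal to the threshold at some direction $\hat v$, pick $p\in U_x$ on the $+\hat v$ side of $y$; membership of $p$ in $U_x$ forces the slab between the two parallel hyperplanes to have $\mu_x$-measure zero, disconnecting $\supp(\mu_x)$.
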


\begin{proof}
Each $\{y \, | \, \mu_x(H_y(\hat{v})) < a \} $ is a halfspace. Indeed, there is a unique halfspace with normal $\hat{v}$ of mass $a$, and the previous set is precisely the points contained in this halfspace.  Therefore the intersection over all $\hat{v}$ is a convex set.  This shows that preimages under $G(x, \cdot)$ of sets $(-\infty, a)$ are convex, which is the definition of quasi-convex.

\vspace*{.3cm}
As we are using the product topology, the domain of $G$, which we denote by $K = X \times Y$, is compact. Because $g(x, y, \hat{v}): (x, y, \hat{v}) \mapsto \mu(H_x(\hat{v}))$ is continuous and $K$ is compact, $g$ is uniformly continuous on $K \times S^{n-1}$.  Thus given $\epsilon > 0$, one can choose $\delta$ so that when $d(x, x') < \delta$ and $|y-y'| < \delta$, then 
\[
|g(x, y, \hat{v}_0) - g(x', y', \hat{v}_0)| < \epsilon
\]  
holds for any $\hat{v}_0$. By compactness in the last argument, we may let $G(x, y) = g(x, y, \hat{v}_{x, y})$.  Therefore
\[
G(x, y)-G(x', y') = g(x, y, \hat{v}_{x, y}) - g(x', y', \hat{v}_{x', y'}) > g(x, y, \hat{v}_{x, y}) - (g(x, y, \hat{v}_{x', y'})+\epsilon) \geq - \epsilon 
\]
holds. Switching roles gives the reverse inequality, $G(x, y) - G(x', y') < \epsilon$, which proves continuity.

\vspace*{.3cm}
Recall the Hausdorff distance is the maximum distance it might require to travel from a point in one of the sets to the other set. We argue by contradiction that $U_{(\cdot)}$ converges to $U_x$ in the Hausdorff distance metric. Assume the contrary, then either (i) there exists a sequence of points $y_{n_i} \in U_{x_{n_i}}$ such that $y_{n_i}$ are bounded away from $U_x$ or (ii) there exists a sequence of points $y_{n_i} \in U_x$ bounded away from $U_{x_{n_i}}$.

In situation (i), compactness implies an accumulation point $y$ for the sequence $y_{n_i}$. However, continuity of $G$ requires $y \in U_x$, because $x_{n_i}\rightarrow x$ and each $G(x_{n_i}, y_{n_i}) \in (0, 1- \frac{1}{n+1}+\alpha]$.  This contradicts the premise that $y_{n_i}$ are bound away from $U_x$.  In particular, this shows that the maximum distance from a point in $U_{x_i}$ to the set $U_x$ is going to $0$. 

\vspace*{.3cm}
In situation (ii), again by compactness there is an accumulation point $y \in U_x$ that is bounded away from infinitely many of the $U_{x_{n_i}}$.  Because we have assumed $\alpha > 0$, Theorem \ref{thm:classic} and the continuity of $G(x, \cdot)$ imply $U_x$ has an interior.  Note that in proving continuity of $G(x, \cdot)$, we used the absolute continuity of $\mu_x$ with respect to Lebesgue measure.

\vspace*{.3cm}
As a first subcase of (ii), we assume $y$ is in the interior of $U_{x}$.  We show by contradiction that $G(x, y) < 1-\frac{1}{n+1}+\alpha$.  Supposing to the contrary, there would be $\hat{v}$ such that $G(x,y)= 1 - \frac{1}{n+1}+\alpha = \mu_x(H_y(\hat{v}))$, and we may select $p \in H_y(-\hat{v}) \cap U_x$ because $y$ is in the interior of $U_x$.  As $p \in U_x$ and $\mu_x(H_y(\hat{v})) = 1-\frac{1}{n+1}$, it must be the case that $\mu_x\left(H_y(-\hat{v}) \cap H_p(\hat{v})\right) = 0$, hence
\[
H_y(-\hat{v}) \cap H_p(\hat{v})\cap \supp(\mu_x)=\emptyset,
\]
and this implies the boundary of $H_{\frac{y+p}{2}}(\hat{v})$ is disjoint from $\supp(\mu_x)$.  Then $\supp(\mu_x)\cap H_{\frac{y+p}{2}}(-\hat{v})$ and $\supp(\mu_x)\cap H_{\frac{y+p}{2}}(\hat{v})$ are non-empty sets whose union is $\supp(\mu_x)$.  As these sets are open in the induced topology on $\supp(\mu_x)$, this contradicts the assumption that $\supp(\mu_x)$ is connected. Therefore we have shown by contradiction that $G(x, y) < 1-\frac{1}{n+1}+\alpha$.  We immediately conclude from the continuity of $G$ that $y \in U_{x_i}$ for large enough $i$.  

\vspace*{.3cm}
In the event that $y$ is not in the interior of $U_{x}$, we can still select an interior point $y' \in U_{x}$ that is arbitrarily close to $y$, because the set is open and convex.  For any such $y'$, the prior argument establishes that $y' \in U_{x_i}$ for large enough $i$.  Therefore, we conclude that the distance between $y$ and $U_{x_i}$ is going to $0$, contradicting our assumption.  This completes the proof showing $D(U_{x_{i}}, U_{x}) \rightarrow 0$.

\end{proof}

As a comment on the proof, the assumption that $\text{sup}(\mu_x)$ is connected was essential in the final conclusion of the proof. This assumption, along with a few others like the use of $\alpha$, are bootstrapped out of the eventual theorem we prove.  It would be nice to eliminate the absolute continuity assumption on $\mu_x$ by using a more general convergence tool like Wasserstein distance.  We necessarily lose $G$'s continuity, but it remains lower semi-continuous.  However, these topological properties alone were insufficient for proving $U_x$ had an interior point or analogous ``deep'' point, which we found necessary in proving Hausdorff convergence for $U_x$.

\vspace*{.3cm}
We are now ready for the key step in proving the main result.

\begin{proposition}\label{prop:cuts}
Let $\mu$ be a probability measure whose support lies within a compact set $S$ of a Hadamard manifold $M$.  Further assume $\mu$ is absolutely continuous with respect to the Riemannian volume measure $\text{vol}_g$ and has connected support.  Then there exists a $\frac{1}{n+1}$-centerpoint for $\mu$.
\end{proposition}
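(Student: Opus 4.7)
The plan is to reduce the existence of a Riemannian centerpoint to a zero of a continuous vector field on $M$, produced by applying the classical Gr\"unbaum theorem (Theorem \ref{thm:classic}) to the tangent-space pullback of $\mu$ at each candidate centerpoint. Specifically, for $c \in M$ set $\mu_c := (\exp_c^{-1})_* \mu$ on $T_c M \cong \mathbb{R}^n$. Since by Definition \ref{def:halfspace} the map $\exp_c$ sends Euclidean halfspaces of $T_c M$ through the origin to the Riemannian halfspaces $H_c(v)$, we have $\mu(H_c(v)) = \mu_c(H_0(v))$; hence $c$ is a Riemannian $\frac{1}{n+1}$-centerpoint of $\mu$ if and only if $0 \in T_c M$ is a Euclidean $\frac{1}{n+1}$-centerpoint of $\mu_c$.

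Fix a small slack $\alpha > 0$ and let $U_c^\alpha \subset T_c M$ denote the Euclidean set of $(\frac{1}{n+1} - \alpha)$-centerpoints of $\mu_c$; by Theorem \ref{thm:classic} it is non-empty, and by quasi-convexity of the Euclidean centrality function it is closed and convex. Absolute continuity of $\mu$ with respect to $\text{vol}_g$ and smoothness of $\exp$ imply that, after identifying tangent spaces over a compact region (for instance, by trivializing $TM$ via parallel transport from a fixed base point $p$, which is possible because the underlying manifold is contractible), the pulled-back measures $c \mapsto \tilde\mu_c$ vary continuously in total variation distance and share a common compact support $Y \subset T_p M$, while each $\supp(\tilde\mu_c)$ is connected. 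Lemma \ref{lem:quasiconvex} then gives that $c \mapsto U_c^\alpha$ is Hausdorff continuous, so $V^\alpha(c) := \Pi_{U_c^\alpha}(0) \in T_c M$ defines a continuous vector field on $M$ whose zeros are exactly the $(\frac{1}{n+1} - \alpha)$-centerpoints of $\mu$.

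To produce such a zero, work on a closed geodesic ball $B = \bar B(p, R)$ with $p \in \supp(\mu)$ and $R > \mathrm{diam}(\supp(\mu))$. For any $c \in \partial B$ and any $s \in \supp(\mu)$, the Euclidean comparison triangle with side lengths $R, d(c,s), d(p,s)$ has an acute angle at $c$ (since $R > d(p,s)$), and the CAT$(0)$ inequality passes this to the Hadamard triangle, giving $\langle \exp_c^{-1}(s), \exp_c^{-1}(p)\rangle_c > 0$. Compactness of $\supp(\mu)$ upgrades this to a uniform positive lower bound, so $\exp_c^{-1}(\supp(\mu))$ lies strictly in the open halfspace with normal $\exp_c^{-1}(p)$, and therefore so does $U_c^\alpha \subset \mathrm{conv}(\exp_c^{-1}(\supp(\mu)))$. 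In particular $V^\alpha(c) \neq 0$ and has positive inner product with $\exp_c^{-1}(p)$, so $V^\alpha$ points strictly inward on $\partial B$. Since $B$ is homeomorphic to a closed Euclidean ball via $\exp_p^{-1}$, the Poincar\'e--Hopf theorem (equivalently, a Brouwer fixed point argument applied to $c \mapsto \exp_c(\epsilon V^\alpha(c))$ for $\epsilon$ sufficiently small) produces a zero $c_\alpha \in B$ of $V^\alpha$, i.e., a $(\frac{1}{n+1} - \alpha)$-centerpoint. Letting $\alpha \to 0$ and extracting a subsequential limit $c_\alpha \to c^*$ in $B$, continuity of $c \mapsto \sup_{\hat v} \mu(H_c(\hat v))$ yields $\mu(H_{c^*}(\hat v)) \leq 1 - \frac{1}{n+1}$ for every $\hat v \in T_{c^*}M$, so $c^*$ is the desired $\frac{1}{n+1}$-centerpoint.

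The main obstacle is establishing continuity of $c \mapsto U_c^\alpha$ across varying tangent spaces, which requires verifying the hypotheses of Lemma \ref{lem:quasiconvex} uniformly in $c$: absolute continuity of $\mu$ gives continuity of $c \mapsto \tilde\mu_c$ in total variation after a suitable trivialization of $TM|_B$, and connectedness of $\supp(\mu)$ carries over to $\supp(\tilde\mu_c)$. A related technicality is the inward-pointing verification on $\partial B$, which rests on CAT$(0)$ angle comparison and on the standard fact that any Euclidean centerpoint of a measure lies in the convex hull of its support.
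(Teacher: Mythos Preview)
Your proof is correct and follows essentially the same strategy as the paper: pull back $\mu$ to each tangent space via $\exp_c^{-1}$, invoke Lemma~\ref{lem:quasiconvex} to obtain Hausdorff-continuous Euclidean centerpoint sets $U_c^\alpha$, take the nearest point $V^\alpha(c)=\Pi_{U_c^\alpha}(0)$ (this is exactly the paper's $u_x$), run a Brouwer-type argument on a closed ball, and finish by sending $\alpha\to 0$ using compactness. The only cosmetic difference is in how the fixed-point step is packaged: the paper post-composes $\psi_x(u_x)$ with the metric projection onto $S$ to obtain a self-map $F:S\to S$ and then uses convexity of the distance function (Toponogov) to argue that a fixed point forces $u_x=0$, whereas you use CAT(0) angle comparison to show $V^\alpha$ points inward on $\partial B$ and apply Poincar\'e--Hopf directly---both variants rest on the same nonpositive-curvature comparison.
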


Before going into the proof details, here is conceptual overview of the proof.  We will define a continuous function $F$ from $S$ to itself, and an application of Brouwer's theorem will show there is a fixed point.  We design $F$ so that the fixed point is a $(\frac{1}{n+1}-\alpha)$-centerpoint.  The $\alpha$ is inherited from the definition of $U_x$ in Lemma \ref{lem:quasiconvex}, and is removed at the end of the proof.  In designing $F$, we adopt normal coordinates at $x$ and pull back the measure $\mu$ from $M$ (i.e. the measure of $U \subset \mathbb{R}^n$ is $\mu(\exp_x(U))$.  In these coordinates, there is a Euclidean-convex set of Euclidean centerpoints $U_x$ provided by the previous lemma, for the pulled-back measure.  We select the closest of these centerpoints to $x$ and denote this point by $u_x$. Finally, $F(x)$ is then defined by projecting $u_x$ onto $S$.  As stated precisely in the appendix, it is the Hadamard assumption that implies a strictly convex distance function, making this projection possible.

\vspace*{.3cm}
The technical part of the proof mostly involves showing continuity of $F(x)$, as it is not hard to show that fixed points are $(\frac{1}{n+1}-\alpha)$-centerpoints.  The main obstacle is to show that $u_x$ varies continuously.  To establish this, we note that the pulled back measures vary continuously with respect to total variation.  Then Lemma \ref{lem:quasiconvex} shows that the Euclidean centerpoint sets $U_x$, $U_{x'}$ are close in Hausdorff distance, provided $x, x'$ are close.  Combining this with convexity of the centerpoint sets, we are able to make $|u_x-u_x'|$ small.

\vspace*{.3cm}
We now provide the details.

\begin{proof}
We may WLOG assume $S$ is a closed Riemannian ball of radius $R$.  By parallel transport we may fix a smooth orthonormal frame $V= (\vec{e}_1, \dots \vec{e}_n)$ on $S$, thereby determining normal coordinate charts at each $x \in S$ defined by 
\[
\psi_x: y \mapsto \exp_x(y^i \vec{e}_i(x)).
\]  
Note that $\psi_x(y)$ varies smoothly both in $x$ and $y$. We may pull back the measure $\mu$ by $\psi_x$ to give the measures $\mu_x(y) dy$. The absolute continuity of these pull back measures with respect to Lebesgue measure is due to $\mu$ being absolutely continuous with respect to the Riemannian volume measure.  Smoothness of parallel transport ensures that the coordinate charts vary smoothly and therefore the $\mu_x(y)$ vary continuously with respect to total variation distance.  Finally, since $\psi_x$ are diffeomorphisms, we see that for each $x \in S$ the measure $\mu_x(y)$ has connected support. The set $S$ is of radius $R$.  Therefore in applying Lemma \ref{lem:quasiconvex}, we may choose $Y$ to be the closed ball of radius $2R$.

\vspace*{.3cm}
First fix $\alpha > 0$. For all $x \in S$, Lemma \ref{lem:quasiconvex} then establishes the existence of non-empty compact convex sets $U_x \subset \mathbb{R}^n$ of Euclidean $(\frac{1}{n+1}-\alpha)$-centerpoints.  There is a unique point $u_x \in U_x$ that is closest to $x$.  However, it is not necessarily the case that $u_x$ is inside $\psi_x^{-1}(S)$, because $\psi_x^{-1}(S)$ is not convex with respect to the Euclidean metric.  To work around this, project $u_x$ onto $S$.  That is,
\[
F(x) := \pi(u_x) := \argmin\limits_{s \in S} d(s, \psi_x(u_x)) \, ,
\]
where by $d(\cdot, \cdot)$ we mean the Riemannian distance.  The projection is well-defined and continuous by \cite[Corollary 5.6]{B}.  Therefore $F$ is well-defined.  In the following we show

\begin{itemize}
\item If $F(x) = x$, then $x$ is a $(\frac{1}{n+1}-\alpha)$-centerpoint contained in $S$.
\item $F(x)$ is continuous.
\end{itemize}
Then since $S$ is a closed ball, an application of Brouwer's fixed point theorem yields the desired result.  

\vspace*{.3cm}
We first show that fixed points are centerpoints.  We argue by contradiction and assume $x$ is a fixed point of $F$ which is not a $(\frac{1}{n+1}-\alpha)$-centerpoint. Observe that $u_x \neq \vec{0}$, because this would imply $x$ is a $(\frac{1}{n+1}-\alpha)$-centerpoint.  Further, we see that $\psi_x(H_0(-u_x))\cap S \neq \emptyset$, since $H_{u_x}(-u_x) \subset H_0(-u_x)$ and $\mu(\psi_x(H_{u_x}(-u_x)) > \frac{1}{n+1}-\alpha>0$ by the centerpoint property.  Next choose $s \in \psi_x(H_{\vec{0}}(-u_x)) \cap S$, and consider the geodesic between $x, s$.  This geodesic is contained in $S$ by the assumption that $S$ is convex.  Triangle inequalities in the form of Toponogov's Theorem \cite{CE2008} (or convexity of the distance function as a simple alternative) show that, initially, moving from $x$ to $s$ along the geodesic decreases the distance to $u_x$.  This means it is not the case that $\pi(\psi_x(u_x)) = x$.

\vspace*{.3cm}
Next we consider the continuity claim. Once we show $u_x \in \mathbb{R}^n$ varies continuously with respect to $x \in S$, then the continuity of $F(x)$ follows because, as noted in Appendix \ref{app:riemannian}, the projection is also continuous.  As a first step, we remark that the pull-back probability densities $\mu_x(y)$ vary continuously with respect $x$, because they are defined by smoothing varying diffeomorphisms $\psi_x$.  Moreover, as $S$ is compact and $\mu$ is supported on $S$, we may assume the $y$ are taken from a compact set.  Then we may apply uniform continuity to show there is $\delta$ so that $d(x,x') < \delta$ implies $|\mu_x(y) - \mu_{x'}(y)| < \epsilon$ for any $y$.  This establishes continuity for the family of measures $\mu_x(y)dy$, with respect to total variation distance.  We can now make use of the regularity properties provided by Lemma \ref{lem:quasiconvex}.

\vspace*{.3cm}
From the lemma's last part, by requiring $d(x, x') < \delta$ for small enough $\delta$, one can ensure $D(U_x, U_{x'}) < \epsilon$.  Let $h_{x} \in U_{x}$ be the point closest to $u_{x'}$; this ensures $|u_{x'} - h_{x}| < \epsilon$.  It is also not difficult to see that $|u_{x'}|-|u_x| < \epsilon$.  Therefore $|h_{x}|-|u_{x}| < 2\epsilon$.  Critically, the Euclidean distance to the origin is strongly convex and $u_x$ minimizes it on the Euclidean convex set $U_x$, which also includes $h_x$. Therefore, qualitatively, since $|h_x|$ and $|u_x|$ are similar in value, we know that $|h_x - u_x|$ is small.  Making this quantitative through the Euclidean law of cosines,
\[
|u_x - h_x|^2 \leq |h_x|^2-|u_x|^2 = (|h_x| - |u_x|)(|h_x| + |u_x|) < \epsilon R
\]
where a sufficiently large $R$ can be taken to be twice the diameter of $S$.  We conclude
\[
|u_x - u_{x'}| \leq | u_x - h_x| + |h_x - u_{x'}| < \sqrt{\epsilon R} + \epsilon \, ,
\]
which establishes continuity for $F(x)$.  This essentially completes the proof, but recall that we have used a small $\alpha$ parameter to define $U_x$, and this resulted in our proving only the existence of $(\frac{1}{n+1}-\alpha)$-centerpoints for $\alpha > 0$.  However, the continuity of the centerpoint function $\sup\limits_{\hat{v}\in S^{n-1}} H_x(\hat{v})$ on $S$ follows from the same argument for proving continuity of $G$ in Lemma \ref{lem:quasiconvex}. From this and compactness of $S$, may conclude the existence of $\frac{1}{n+1}$-centerpoints.
\end{proof}

This nearly proves the main part of Theorem \ref{thm:sharpness}.  The main difference is the absence of a few simplifying assumptions, namely compactness and connected support.  The fact that $x \in S$ provided $S$ is convex was also postponed.  We complete the proof here.

\begin{proof} [Proof for Theorem \ref{thm:sharpness}]
We first remove the connected support assumption. For measures $\mu$ supported on compact $S$, we may WLOG assume $S$ is a ball and therefore connected.  Then we may define the probability measures $(1-\epsilon)\mu + \epsilon \frac{\mathbbm{1}_{S}}{\text{vol}_g(S)} \cdot  \text{vol}_g $.  Proposition \ref{prop:cuts} applies to these measures. Thus it is clear that we may construct $(\frac{1}{n+1}-\epsilon)$-centerpoints for $\mu$.  Again using the continuity of the centerpoint function and compactness of $S$ as at the end of Proposition \ref{prop:cuts}'s proof, it follows that a $\frac{1}{n+1}$-centerpoint exists for $\mu$.

\vspace*{.3cm}
Next we extend the result by removing the assumption that $S$ is compact.  Fixing some point $x \in M$, we may define the family of compact sets $S_i = S \cap B_g(x,i)$, where $i$ ranges over the positive integers and $\bar{B}_g(x,i)$ denotes the closed ball of radius $i$ around $x$.  These sets satisfy $\lim\limits_{i\rightarrow \infty} \mu(S_i) = 1$.  Applying Proposition \ref{prop:cuts} to these $S_i$, we get points $s_i$ that are at least $\left(\frac{1}{n+1}-\mu(S_i^C)\right)$-centerpoints for $\mu$.  Moreover, these $s_i$ must all lie in some compact set $C \subset S$.  Indeed, by the analog of the separating hyperplane theorem proven in Lemma \ref{lem:sht}, any point $p \notin B_g(x,i)$ will have a halfspace $H_p(\hat{v}) \cap S_i = \emptyset$, and thus be at most a $\mu(S_i^C)$-centerpoint. As in Lemma \ref{lem:quasiconvex}, the function $G:C \rightarrow \mathbb{R}$ defined by $G(y):= \sup\limits_{\hat{v} \in S^{n-1}} \mu(H_y(\hat{v}))$ is continuous.  Because $\lim\limits_{i\rightarrow \infty} G(s_i) \leq 1-\frac{1}{n+1}$, $C$ is compact, and $G$ is continuous, it follows that there is some $c \in C \subset S$ such that $G(c) \leq 1-\frac{1}{n+1}$.  Therefore this $c$ is a $\frac{1}{n+1}$-centerpoint.

\vspace*{.3cm}
If we additionally assume $S$ is convex, then the separating hyperplane theorem again gives that the centerpoint satisfies $c \in S$.

\vspace*{.3cm}
Finally, we remind the reader that Proposition \ref{prop:tight} established the second part of the theorem, concerning sharpness.
\end{proof}
\section{Upper Bound on Needed Subgradient Calls}\label{sec:application}

We require one final lemma for the application to convex optimization.  In this lemma, as in the past, $\text{vol}_g$ will denote the Riemmanian volume measure and $B_g(x,r)$ the open ball of radius $r$ around $x$.

\begin{lemma}\label{lem:cut_ball}
Suppose $f$ is convex and $L$-Lipschitz on its convex domain $S \subset M$, where $M$ is a Hadamard manifold. Additionally assume the minimum $x_*$ is in the $\epsilon$-interior of $S$, meaning $B_g(x_*,\epsilon)\subset S$.  Now suppose we are given a sequence of cutting planes $H_{c_i}(v_i)$, $i = 1, 2, \cdots, N$ with $c_i \in S$ and $v_i \in \partial f_{c_i}$ such that the remaining feasible set $S' := S \cap_i H_{c_i}(v_i)$ satisfies the volume bound
\[
\text{vol}_g(S' := S \cap_i H_{c_i}(v_i)) < \frac{(\epsilon/L)^n}{n^n}.
\]
Then one of the $c_i$ satisfies $f(c_i) - f(x_*) \leq \epsilon $.
\end{lemma}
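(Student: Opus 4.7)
I would argue the contrapositive: assuming every $c_i$ satisfies $f(c_i) - f(x_*) > \epsilon$, I produce a subset of $S'$ whose volume already meets the threshold $(\epsilon/L)^n/n^n$, contradicting the hypothesis. The witness is a small Riemannian ball centered at $x_*$.

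Set $r := \min(\epsilon, \epsilon/L)$ and let $B := B_g(x_*, r)$; the $\epsilon$-interior assumption gives $B \subset B_g(x_*, \epsilon) \subset S$. For any $y \in B$ the Lipschitz bound yields
\[
f(y) - f(x_*) \leq L \cdot d(y, x_*) < Lr \leq \epsilon < f(c_i) - f(x_*)
\]
for every $i$, hence $f(y) < f(c_i)$. Lemma \ref{lem:cutting_plane} then forces $y \in H_{c_i}(v_i)$ (as the contrapositive of ``$y \in S \setminus H_{c_i}(v_i) \Rightarrow f(y) \geq f(c_i)$''), so $B \subset S \cap_i H_{c_i}(v_i) = S'$.

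It remains to lower bound $\text{vol}_g(B)$. Here the Hadamard hypothesis enters through standard Rauch-type volume comparison: non-positive sectional curvature makes $\exp_{x_*}$ volume non-decreasing, so $\text{vol}_g(B) \geq \omega_n r^n$, where $\omega_n$ is the volume of the Euclidean unit ball in $\mathbb{R}^n$. A crude estimate by inscribing a cube of side $2r/\sqrt{n}$ inside the Euclidean ball yields $\omega_n r^n \geq (2r)^n/n^{n/2} \geq r^n/n^n$. When $L \geq 1$ this already exceeds $(\epsilon/L)^n/n^n$; the case $L < 1$ reduces to $L \geq 1$ by replacing $L$ with $\max(L,1)$, which preserves the Lipschitz hypothesis. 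Either way we contradict the assumed volume bound.

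The only conceptual step is the volume comparison, which is the sole place the Hadamard assumption is used; everything else is a direct combination of Lipschitz continuity with Lemma \ref{lem:cutting_plane}. The mild obstacle, as always with such dimensional bounds, is keeping the explicit constant $1/n^n$ honest---this is why the inscribed-cube estimate, rather than the sharper $\omega_n$ asymptotics, is the cleanest route.
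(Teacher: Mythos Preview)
Your argument is essentially the paper's proof run in contrapositive. The paper shows directly that the ball $B_g(x_*,\epsilon/L)$ cannot be contained in $S'$ (by the Hadamard volume comparison and the crude bound $\omega_n > 1/n^n$), picks a point $x'$ of this ball lying outside some $H_{c_i}(v_i)$, and then uses Lemma~\ref{lem:cutting_plane} together with the Lipschitz bound to conclude $f(c_i)-f(x_*)\le\epsilon$. You instead assume every $c_i$ fails this inequality and show the ball sits inside $S'$, reaching the same contradiction with the same ingredients.

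The one genuine slip is your handling of $L<1$. Replacing $L$ by $\max(L,1)=1$ does preserve the Lipschitz hypothesis, but it \emph{strengthens} the volume hypothesis: the threshold $(\epsilon/L)^n/n^n$ drops to $\epsilon^n/n^n$, which is strictly smaller when $L<1$. So knowing the lemma for $L'=1$ does not deliver it for the original, weaker hypothesis. Concretely, your own computation gives $\mathrm{vol}_g(S')\ge\omega_n r^n$ with $r=\epsilon$ in this regime, and $\omega_n\epsilon^n$ need not dominate $(\epsilon/L)^n/n^n$ once $L$ is small enough. In fairness, the paper's proof glosses over this case as well: it tacitly needs $B_g(x_*,\epsilon/L)\subset S$ so that the point $x'$ it produces actually lies in $S$ (otherwise Lemma~\ref{lem:cutting_plane} does not apply), and that containment is exactly the condition $L\ge 1$ given only the $\epsilon$-interior assumption.
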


\begin{proof}
The main fact to be established is that $\text{vol}_g(B_g(x_*,\frac{\epsilon}{L})) > \text{vol}_g(S')$, as this implies that one of the complements of the halfspaces $H_{c_i}(v_i)$ must have intersected $B_g(x_*,\frac{\epsilon}{L})$.  By volume comparison methods (see \cite{CE2008}), the volume of this geodesic ball is greater or equal to the volume of a Euclidean ball of equal radius.  A reference justifying the exact version required is included in Appendix \ref{app:riemannian} as Theorem \ref{thm:bg}.  Hence we obtain 
\[
\text{vol}_g(B_g(x_*,\frac{\epsilon}{L})) > \frac{(\epsilon/L)^n}{n^n} > \text{vol}_g(S') ,
\]
by using $\frac{\pi^{\frac{n}{2}}}{\Gamma(\frac{n}{2}+1)} \frac{\epsilon^n}{L^n} > \frac{1}{n^n}\frac{\epsilon^n}{L^n}$ in the first inequality.
 
\vspace*{.3cm}
It follows that there exists a point $x' \in B(x_*, \frac{\epsilon}{L})$ that lies in the complement of one of the halfspaces $H_{c_i}(v_i)$.  From Lemma \ref{lem:cutting_plane}, $f(c_i) \leq f(x')$.  The Lipschitz bound on $f$ then gives
\[
f(c_i) - f(x_*) \leq f(x')-f(x_*) \leq L \cdot d(x', x_*) \leq \epsilon .
\]
\end{proof}

The proof of Theorem \ref{thm:oracle_cuts} is now a rather straightforward consequence.

\begin{proof}[Proof for Theorem \ref{thm:oracle_cuts}]
Lemma \ref{lem:cut_ball} shows that one of the origins of the cuts is $\epsilon$ from optimal for the function $f$ as soon as the remaining set, denoted by $S'$, has volume $O(\frac{\epsilon^n }{n^n L^n})$.

\vspace*{.3cm}
We must only bound the number of halfspaces needed to reduce the volume of $S'$ to this amount.  Proceeding iteratively, apply Theorem \ref{thm:sharpness} with $\mu$ being the Riemannian volume measure restricted to $S'$ (i.e. $\mu = \frac{\mathbbm{1}_{S'}}{\text{vol}_g(S')} \cdot  \text{vol}_g$).  As the support of $\mu$ is contained in the convex set $S$, Theorem \ref{thm:sharpness} shows that we may choose the cut centers to be $\frac{1}{n+1}$-centerpoints $c_i \in S$ for the remaining set $S' \subset S$, so that the volume is reduced by a factor $(1-\frac{1}{n+1})$ each cut.  This means the number of iterations needed is $O(n^2 \log\left(n L \, \text{vol}_g(S)\epsilon^{-1}) \right)$.
\end{proof}

\section{Acknowledgments}


Many people have listened patiently and offered advice which has been helpful for this work.  In particular, many thanks to Alexander Appleton, Richard Bamler, Andrew Hanlon, and Nikhil Srivastava.

\appendix
\section{Riemannian Overview}\label{app:riemannian}
We will be working in the setting of Riemannian geometry, but will not use much machinery.  We provide an informal overview.  The definitions we introduce here are generally standard and formalized in introductory texts, one such being \cite{L}.

\vspace*{.3cm}
An $n$-dimensional (smooth) manifold $M$ can be understood as a space that is locally diffeomorphic to $\mathbb{R}^n$, so we identify these subsets of $M$ with coordinates $(x_1, \dots, x_n)$.  This allows us to define smooth curves $\gamma:\mathbb{R}\rightarrow M$, by requiring their coordinate representations $(x_1(t), \dots, x_n(t))$ to be smooth.  We may define velocities $\gamma'(t)$ by associating them with $(x_1'(t), \dots, x_n'(t))$, leading to the notion of the tangent spaces $T_xM \cong \mathbb{R}^n$.

\vspace*{.3cm}
Riemannian manifolds additionally specify a metric for measuring the size of these velocities, by defining an inner product $\langle \cdot\, , \cdot \, \rangle_x$ on the tangent space of each $x \in M$.  This immediately enables the definition of curve length, as $\int |\gamma'(t)|_{\gamma(t)} dt$. It also gives a method of measuring volume; if $g_{ij}$ is the bilinear form for the metric in a local coordinate choice, then $\sqrt{|g|} dx^1\wedge\cdots\wedge dx^n$ is the Riemannian volume form.

\vspace*{.3cm}
It also turns out to be helpful to compute directional derivatives for vector fields (or acceleration along curves).  Requiring a few natural conditions leads to a unique Riemannian connection $\nabla: T_xM \times T_xM \rightarrow T_xM$ determined by the metric.  It is known as the Levi-Civita connection.  In the coordinates of a local frame $E = (\vec{e}_1, \dots, \vec{e}_n)$, which provides a basis for the tangent spaces of a neighborhood, the Riemannian connection is given by
\[
\nabla_{\vec{e}_i} \vec{e}_j = \Gamma_{ji}^k \vec{e}_k
\]
for the Christoffel symbols $\Gamma_{ij}^k$.  When the acceleration of a curve is $0$, i.e. $\nabla_{\gamma'(t)} \gamma'(t) \equiv 0$, we say that curve is a geodesic.  This is a second order non-linear ODE system for $\gamma(t) = (x_1(t), \dots, x_n(t))$,
\[
\ddot{x}^k(t) + \dot{x}^i\dot{x}^j \Gamma_{ij}^k(x(t)) = 0 .
\]
A unique solution will exist locally provided we specify the initial position and velocity.  The exponential map is defined by $\exp_p(v) = \gamma(1)$. For Hadamard manifolds, the exponential map is well-defined for any values of $p$ and $v$.  

\vspace*{.3cm}
At any any $x \in M$ we may consider the image of a tangent plane $\sigma_x$ spanned by $v, w \in T_x M$. Locally around $x$ the image is a surface.  The sectional curvature of the 2-plane $\sigma_x$ is defined to be the Gaussian curvature of the image surface at $x$.  Lower and upper bounds of the sectional curvature enable generalizations of Euclidean tools like ball volume and triangle trigonometry estimates.  The Bishop-Gromov volume comparison theorem is one important result along these lines.  Although usually stated for its volume upper bound by assuming just a lower bound on curvature, it is understood that the proof also provides a lower volume bound \cite{Petersen}.  Here we state a specialization of this theorem that suffices for our application,

\begin{theorem}[Bishop-Gromov]\label{thm:bg}
Suppose $M$ is a Hadamard manifold.  Let $\text{vol}_g$ denote the Riemannian volume and $B_g(x,r)$ denote the open ball of radius $r$ around $x$.  Then
\[
\text{vol}_g(B_g(x,r)) \leq \frac{\pi^{\frac{n}{2}}}{\Gamma(\frac{n}{2}+1)} r^n.
\]
The right side of this inequality is the volume of a Euclidean ball of radius $r$.
\end{theorem}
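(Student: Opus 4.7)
The inequality as written is the reverse of what the proof of Lemma \ref{lem:cut_ball} actually invokes (there the chain $\text{vol}_g(B_g(x_*,\epsilon/L)) \geq \pi^{n/2}(\epsilon/L)^n/\Gamma(n/2+1) > (\epsilon/L)^n/n^n$ is used, so the Riemannian ball is supposed to be \emph{at least} as large as the Euclidean one), and indeed on any Hadamard manifold one has $\geq$, not $\leq$. I take the $\leq$ in the theorem to be a typographic slip and plan to prove the standard Cartan--Hadamard/Günther lower bound
\[
\text{vol}_g(B_g(x,r)) \;\geq\; \frac{\pi^{n/2}}{\Gamma(n/2+1)}\, r^n,
\]
via a Jacobi field comparison.

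Because $(M,g)$ is Hadamard, $\exp_x : T_xM \to M$ is a global diffeomorphism, so polar coordinates $v = s\theta$ with $s > 0$ and $\theta \in S^{n-1} \subset T_xM$ cover $M \setminus \{x\}$. The first step is to pull the Riemannian volume form back to $T_xM$ and write it in these coordinates as $\mathcal{J}(s,\theta)\,ds\,d\omega(\theta)$, where $d\omega$ is the round measure on $S^{n-1}$ and $\mathcal{J}(s,\theta) = \lVert J_2(s) \wedge \cdots \wedge J_n(s) \rVert$, with $J_i$ the Jacobi field along $\gamma_\theta(s) := \exp_x(s\theta)$ satisfying $J_i(0) = 0$ and $J_i'(0) = e_i$ for an orthonormal basis $\{e_2, \ldots, e_n\}$ of $\theta^{\perp}$. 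The problem then reduces to the pointwise bound $\mathcal{J}(s,\theta) \geq s^{n-1}$, since integrating over $s \in [0,r]$ and $\theta \in S^{n-1}$ gives $\lvert S^{n-1}\rvert\, r^n/n = \pi^{n/2} r^n / \Gamma(n/2 + 1)$.

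For the pointwise estimate, I would bundle the $J_i$ into a matrix-valued map $A(s) : \theta^{\perp} \to \theta^{\perp}$ solving the Jacobi equation $A'' + R(s) A = 0$ with $A(0) = 0$, $A'(0) = \mathrm{Id}$, where the self-adjoint endomorphism $R(s)$ satisfies $R(s) \preceq 0$ by the non-positive sectional curvature hypothesis. The shape operator $U(s) := A'(s) A(s)^{-1}$ then satisfies the matrix Riccati equation $U' + U^2 + R = 0$; comparing against the Euclidean model $U_0(s) = s^{-1}\mathrm{Id}$ (which solves the same equation with $R \equiv 0$) yields $U(s) \succeq U_0(s)$, hence $(\log \det A)'(s) = \mathrm{tr}\,U(s) \geq (n-1)/s$, and integration from $0^+$ gives $\det A(s) \geq s^{n-1}$. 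This is exactly $\mathcal{J}(s,\theta) \geq s^{n-1}$.

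The main obstacle is the Riccati comparison near $s = 0$, since both $U$ and $U_0$ blow up like $1/s$ there. The standard workaround is to note that $A(s) = s\,\mathrm{Id} + O(s^3)$ forces $U(s) - U_0(s) \to 0$ as $s \to 0^+$, and then to derive an ODE for the symmetric matrix $U - U_0$ and run a monotonicity/Gronwall argument that uses $R \preceq 0$ to conclude $U - U_0 \succeq 0$ for all $s \in (0,r]$. This is a routine but delicate calculation carried out in the references in Appendix \ref{app:riemannian} (\cite{CE2008} and the Petersen text cited there); once it is in hand, the volume estimate is immediate by integration.
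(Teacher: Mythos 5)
The paper does not actually prove this statement; it is quoted as a classical comparison result and deferred to \cite{Petersen} and \cite{CE2008}, so there is no internal proof to compare against. Your diagnosis of the inequality sign is correct and worth stating explicitly: as printed, the $\leq$ is false for general Hadamard manifolds (hyperbolic balls have exponentially large volume), and Bishop--Gromov with $K=0$ would require $\mathrm{Ric}\geq 0$, which non-positive sectional curvature does not supply. What the paper needs in Lemma \ref{lem:cut_ball} and what its surrounding prose describes is the Bishop--G\"unther \emph{lower} bound $\mathrm{vol}_g(B_g(x,r))\geq \pi^{n/2}r^n/\Gamma(\tfrac n2+1)$, which follows from the sectional curvature upper bound $K\leq 0$; the displayed inequality is a typo.

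Your proposed proof is the standard and correct route to that lower bound. The reduction to the pointwise Jacobian estimate is sound: on a Hadamard manifold $\exp_x$ is a global diffeomorphism with no conjugate points, so $\det A(s)>0$ for $s>0$, the polar-coordinate volume element is $\det A(s)\,ds\,d\omega$, and integrating $s^{n-1}$ gives exactly $|S^{n-1}|r^n/n=\pi^{n/2}r^n/\Gamma(\tfrac n2+1)$. The curvature endomorphism $R(s)=R(\cdot,\gamma')\gamma'$ on $\theta^{\perp}$ is indeed negative semidefinite, since $\langle R(s)w,w\rangle$ is a sectional curvature times a nonnegative Gram determinant, and $U=A'A^{-1}$ is self-adjoint because the Lagrangian identity $A'^{T}A=A^{T}A'$ holds from the initial conditions. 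The only genuinely delicate step is, as you say, the Riccati comparison at $s=0^+$ where both $U$ and $U_0=s^{-1}\mathrm{Id}$ blow up; the expansion $A(s)=s\,\mathrm{Id}+O(s^3)$ gives $U-U_0\to 0$, and the standard comparison argument then yields $U\succeq U_0$, hence $(\log\det A)'\geq (n-1)/s$ and $\det A(s)\geq s^{n-1}$. You defer that comparison to the literature rather than carrying it out, so the writeup is an outline rather than a self-contained proof, but every step is standard and nothing in the outline would fail. Given that the paper itself merely cites the result, this level of detail is more than adequate; the one substantive correction to propagate is the direction of the inequality in the theorem statement.
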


Hadamard manifolds are simply connected manifolds of non-positive sectional curvature.  They have been extensively studied in mathematical literature. We collect a few commonly used facts which we made use of or provide intuition.  For Hadamard manifolds,

\begin{itemize}
\item The exponential maps $\exp_x(\cdot)$ are diffeomorphisms from $T_xM$ to $M$ (Cartan-Hadamard theorem).

\item The distance to a point, $d(x, \cdot)$, is strictly convex.  The distance to a closed, convex set is convex.

\item Geodesics between points are unique and distance minimizing.

\item Projection onto closed, convex sets is well defined and continuous.
\end{itemize}

All of these properties can be found in \cite{B}.

\vspace*{.3cm}
In the proof of Theorem \ref{thm:sharpness}, we made use of the separating hyperplane theorem of convex geometry.  Here we briefly state and prove a version sufficient for this application.

\begin{lemma}\label{lem:sht}
Suppose $M$ is a Hadamard manifold, $S \subset M$ is a closed, convex set, and $p \notin S$.  Then there is a halfspace based at $p$ satisfying $H_p(v)\cap S = \emptyset$
\end{lemma}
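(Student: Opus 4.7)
The plan is to choose $v$ to be the tangent vector at $p$ pointing toward the nearest point of $S$, and then use the first variation of the distance function together with a Toponogov-style angle comparison to show no point of $S$ lands in $H_p(v)$.

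\textbf{Construction of $v$.} Because $S$ is closed and convex and $M$ is a Hadamard manifold, the appendix bullet points guarantee that the projection $\pi_S\colon M \to S$ is well-defined (the squared distance $\tfrac{1}{2} d(p,\cdot)^2$ is strictly convex on $M$, and a strictly convex function on a closed convex set attains a unique minimum on it). Let $q := \pi_S(p)$; since $p \notin S$ we have $q \neq p$, so the vector
\[
v := \exp_p^{-1}(q) \in T_p M
\]
is nonzero. I will verify $H_p(v) \cap S = \emptyset$, i.e.\ that for every $s \in S$, writing $w := \exp_p^{-1}(s)$, one has $\langle w, v\rangle_p \geq 0$.

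\textbf{First variation at $q$.} Fix $s \in S$ and let $\gamma(t)$ be the minimizing geodesic from $q$ to $s$, which lies inside $S$ by convexity of $S$. Since $q$ minimizes $d(p,\cdot)^2$ on $S$, the function $t \mapsto \tfrac{1}{2} d(p,\gamma(t))^2$ has non-negative derivative at $t=0$. A standard first variation computation identifies this derivative with $\langle -\exp_q^{-1}(p),\, \exp_q^{-1}(s)\rangle_q$, so
\[
\langle \exp_q^{-1}(p),\, \exp_q^{-1}(s)\rangle_q \leq 0,
\]
i.e.\ the Riemannian (Alexandrov) angle at $q$ in the geodesic triangle $\triangle pqs$ is at least $\pi/2$.

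\textbf{Transport to $p$ via CAT(0) comparison.} Let $\bar p, \bar q, \bar s \in \mathbb{R}^2$ be the comparison triangle with the same side lengths as $\triangle pqs$. A Hadamard manifold is CAT(0), so each Alexandrov angle is bounded above by the corresponding Euclidean comparison angle. In particular the comparison angle at $\bar q$ is at least $\pi/2$, so the Euclidean law of cosines yields
\[
d(p,s)^2 \geq d(p,q)^2 + d(q,s)^2,
\]
from which the comparison angle at $\bar p$ is at most $\pi/2$. Applying the CAT(0) bound once more at $p$, the Alexandrov angle at $p$ between $\exp_p^{-1}(q)$ and $\exp_p^{-1}(s)$ is at most $\pi/2$, giving $\langle v, w\rangle_p \geq 0$ as required. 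Hence $s \notin H_p(v)$, and since $s \in S$ was arbitrary, $H_p(v) \cap S = \emptyset$.

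The only delicate point is the transfer of the obtuse angle condition from $q$ (where the first variation lives) to $p$ (where the halfspace is anchored); this is exactly the role played by the CAT(0) triangle comparison. The rest is routine once the projection $q$ is known to exist and be unique, which is the standard fact used elsewhere in the paper.
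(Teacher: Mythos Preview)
Your argument is correct: the first-variation inequality at the foot point $q=\pi_S(p)$ gives an obtuse angle at $q$, and the CAT(0) triangle comparison transfers this to an acute angle at $p$, yielding $\langle \exp_p^{-1}(s), v\rangle_p \ge 0$ for every $s\in S$, which is exactly $s\notin H_p(v)$.

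The paper takes a different, shorter route. It observes that $f(x)=d(x,S)$ is convex on all of $M$ (citing \cite{B}), picks any subgradient $v\in\partial f_p$, and invokes Lemma~\ref{lem:cutting_plane}: every $s\in S$ has $f(s)=0<f(p)$, so $s$ must lie in $H_p(v)$, and hence the opposite halfspace $H_p(-v)$ is disjoint from $S$. Your approach is more hands-on: you explicitly construct the separating direction as $\exp_p^{-1}(\pi_S(p))$ and verify the halfspace condition by a direct CAT(0) comparison, rather than packaging the geometry into the convexity of $d(\cdot,S)$ and the subgradient inequality. The paper's proof is slicker because it reuses two facts already on hand (convexity of distance to a convex set, and the cutting-plane lemma), while yours is essentially a self-contained unpacking of why that subgradient points the right way; in particular, your first-variation step is exactly the variational characterization of the projection, and your comparison step is what underlies the convexity of $d(\cdot,S)$ in nonpositive curvature. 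Both ultimately produce the same separating direction (your $v$ is the negative of the paper's subgradient, up to normalization).
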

\begin{proof}
Consider the function $f(x) = d(x,S)$. By \cite{B} this function is convex and hence for any $p \notin S$ there exists a subgradient $v \in \partial f_p$ (see Definition \ref{def:gconvex}). Then $H_p(v)$ is such a separating hyperplane, by Lemma \ref{lem:cutting_plane}.
\end{proof}

In the introduction, we mentioned that the gradient of a differentiable convex function is a subgradient.  We provide a short justification for this simple fact, as it is an important source of subgradients.  In Riemannian geometry, the gradient is defined by duality using the metric; that is, $\nabla$ satisfies $\langle \nabla f, \cdot \rangle = df(\cdot)$.
\begin{lemma}
Let $f(x):M\rightarrow \mathbb{R}$ be convex along geodesics as well as differentiable.  Then 
\[
f(y) \geq f(x) + \langle \nabla f(x), \exp_x^{-1}(y) \rangle_x
\]
\end{lemma}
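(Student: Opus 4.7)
The plan is to reduce the Riemannian statement to a one-variable convexity inequality along the geodesic from $x$ to $y$. Specifically, I would set $v := \exp_x^{-1}(y)$ (which exists and is unique since $M$ is Hadamard, so $\exp_x$ is a diffeomorphism by the Cartan-Hadamard theorem) and consider the geodesic $\gamma(t) := \exp_x(tv)$, noting $\gamma(0) = x$, $\gamma(1) = y$, and $\gamma'(0) = v$.

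Next, I would apply the hypothesis that $f$ is convex along geodesics to the function $h(t) := f(\gamma(t))$. Since $h$ is a convex function of one real variable and is differentiable (because $f$ is differentiable and $\gamma$ is smooth), the standard one-variable subgradient inequality gives $h(1) \geq h(0) + h'(0)$, i.e. $f(y) \geq f(x) + h'(0)$.

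It then remains to identify $h'(0)$ with $\langle \nabla f(x), v\rangle_x$. By the chain rule, $h'(0) = df_x(\gamma'(0)) = df_x(v)$, and by the definition of the Riemannian gradient via the duality $\langle \nabla f(x), \cdot\rangle_x = df_x(\cdot)$, we get $h'(0) = \langle \nabla f(x), v\rangle_x = \langle \nabla f(x), \exp_x^{-1}(y)\rangle_x$. Substituting into the previous inequality yields exactly the desired bound; comparing with Definition \ref{def:gconvex} confirms that $\nabla f(x) \in \partial f_x$.

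There is no real obstacle here: all the work is done by the elementary one-variable inequality $h(1) \geq h(0) + h'(0)$ for differentiable convex $h$, and the only Riemannian content is the identification of $h'(0)$ with the inner product against the gradient, which is immediate from the definition of $\nabla f$.
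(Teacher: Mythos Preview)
Your proposal is correct and follows essentially the same route as the paper: reduce to the one-variable convex inequality along the geodesic $t\mapsto \exp_x(tv)$, then identify the derivative at $t=0$ with $\langle \nabla f(x), v\rangle_x$ via the definition of the Riemannian gradient. The only cosmetic difference is that the paper writes $y=\exp_x(t_0 v)$ and explicitly invokes $d(\exp_x)|_0 = I$ in the chain rule, whereas you normalize to $t\in[0,1]$ and phrase the same fact as $\gamma'(0)=v$.
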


\begin{proof}
Let $y = \exp_x(t_0 v)$.  That $f$ is convex on geodesics means $f(\exp_x(tv))$ is convex in $t$, so
\[
f(y) \geq f(x) + t_0 \frac{d}{dt}f(\exp_x(tv))|_0 \, .
\]
But using the chain rule and that $d(\exp_x)|_0 = I$ (see \cite{L}), 
\[
\frac{d}{dt}f(\exp_x(tv))|_0 = df(d \exp_x|_{\vec{0}}(v)) = df(v) = \langle \nabla f(x), v \rangle_x \, .
\]
\end{proof}



\bibliographystyle{plain}
\bibliography{references}

\end{document}